\documentclass[10pt,a4paper]{amsart}
\usepackage[a4paper,marginratio=1:1]{geometry}
\usepackage[protrusion=all]{microtype}
\usepackage{amssymb}
\usepackage[initials,non-sorted-cites]{amsrefs}
\usepackage{mybibspec}
\usepackage[shortlabels]{enumitem}
\usepackage[symbol,perpage]{footmisc}
\usepackage{graphicx}
\usepackage{braket,tensor,bm,stmaryrd,fouridx}

\numberwithin{equation}{section}
\usepackage{mathtools}\mathtoolsset{showonlyrefs}

\newtheorem{thm}{Theorem}[section]
\newtheorem{prop}[thm]{Proposition}
\newtheorem{lem}[thm]{Lemma}
\newtheorem{cor}[thm]{Corollary}
\theoremstyle{definition}
\newtheorem{dfn}[thm]{Definition}
\theoremstyle{remark}
\newtheorem{rem}[thm]{Remark}

\newcommand{\abs}[1]{\lvert#1\rvert}

\newcommand{\conj}[1]{{\overline{#1}}}
\newcommand{\norm}[1]{\lVert#1\rVert}
\newcommand{\bdry}{\partial}
\newcommand{\orddot}{\mathord{\cdot}}
\newcommand{\compose}{\mathbin{\circ}}
\newcommand{\transpose}[1]{{\fourIdx{t}{}{}{}{#1}}}
\DeclareMathOperator{\GL}{GL}
\DeclareMathOperator{\PU}{PU}
\DeclareMathOperator{\PO}{PO}
\DeclareMathOperator{\Aut}{Aut}
\DeclareMathOperator{\Res}{Res}
\DeclareMathOperator{\Length}{Length}
\newcommand{\ch}{\smash{\operatorname{ch}}_\mathcal{H}}
\newcommand{\chtilde}{\smash{\widetilde{\operatorname{ch}}}_\mathcal{H}}
\newcommand{\dCC}{d_{\mathrm{CC}}}
\newcommand{\dK}{d_{\mathcal{H}}}
\newcommand{\BK}{\mathbb{B}_{\mathcal{H}}}
\newcommand{\normH}[1]{\norm{#1}_{\mathcal{H}}}
\newcommand{\comptwoform}{\Omega}
\newcommand{\comptwoformH}{\Omega_{\mathcal{H}}}

\title[CR-invariant energy of Legendrian knots in the Heisenberg group]{CR-invariant energy of Legendrian knots\\ in the Heisenberg group}
\author[Y. Matsumoto]{Yoshihiko Matsumoto}
\address{Department of Mathematics, Graduate School of Science, The University of Osaka, Toyonaka, Osaka 560-0043, Japan}
\email{matsumoto.yoshihiko.sci@osaka-u.ac.jp}
\author[J. O'Hara]{Jun O'Hara}
\address{Department of Mathematics and Informatics, Faculty of Science, Chiba University, Inage, Chiba 263-8522, Japan}
\email{ohara@math.s.chiba-u.ac.jp}
\subjclass[2020]{Primary 57K10; Secondary 32V05, 57K33}

\begin{document}

\begin{abstract}
	We introduce an energy functional for Legendrian knots in the 3-dimensional Heisenberg group $\mathcal{H}$,
	which serves as a sub-Riemannian analog of
	the M\"obius invariant knot energy in Euclidean 3-space introduced by the second author.
	The energy is obtained by regularizing a divergent integral of the potential of order $-2$
	with respect to the Kor\'anyi distance on $\mathcal{H}$;
	this choice of distance is essential for the energy to be invariant under the action of $\PU(2,1)$.
	We characterize $\mathbb{R}$-circles in $\mathcal{H}$ as the minimizers of the energy,
	and establish a Heisenberg analog of the Doyle--Schramm cosine formula.
	We also show that the energy integrand admits an expression in terms of
	a complex-valued 2-form on the complement of the diagonal in $\mathcal{H}\times\mathcal{H}$,
	providing a partial analog of the infinitesimal cross ratio interpretation known from the classical setting.
\end{abstract}

\maketitle

\section*{Introduction}

Knot energies in Euclidean 3-space were introduced by the second author \cite{OHara-91}
in response to a problem posed by Fukuhara and Sakuma: to define an energy functional on the space of knots
so that a canonical representative may be obtained as a minimizer within each knot type.
These energies are defined by regularizing divergent integrals of $r^{-\alpha}$-potentials.
Among them, the energy for $\alpha=2$ is known to be distinguished:
it is the smallest $\alpha$ for which the corresponding regularized integral diverges to $+\infty$
as the knot approaches a self-intersection.

Freedman, He, and Wang \cite{Freedman-He-Wang-94} showed that the $r^{-2}$-energy
is invariant under M\"obius transformations of $\mathbb{R}^3$,
and that each prime knot type admits an energy minimizer.
This initiated the study of geometric knot theory from the viewpoint of energy functionals,
and the theory has since been extended to higher-dimensional submanifolds and to Riemannian manifolds.

In the present work, we consider the sub-Riemannian case of the 3-dimensional Heisenberg group.
We define an analogous energy functional for Legendrian (i.e., horizontal) knots
in the Heisenberg group $\mathcal{H}=\mathbb{C}\times\mathbb{R}$ and investigate its properties.
Along the lines of the classical construction, we regularize the
divergent integral of the $r^{-2}$-potential
with respect to the Kor\'anyi distance on $\mathcal{H}$, which is denoted by $\dK$ in this paper,
to define our energy.
The use of the Kor\'anyi distance $\dK$,
rather than the (equally, if not more, well-known) Carnot--Carath\'eodory distance,
is essential:
it is this choice that makes the integrand $dp\,dq/\dK(p,q)^2$ on $(K\times K)\setminus\Delta$ invariant
under the diagonal action of $\PU(2,1)$,
where $\PU(2,1)$ is the group of CR automorphisms of the one-point compactification $\mathcal{H}\cup\set{\infty}$
of the Heisenberg group.

This group of CR automorphisms is the counterpart in our setting of
the group $\PO(4,1)$ of M\"obius transformations in the classical theory,
and as in that case,
we are able to show that our energy is invariant under the $\PU(2,1)$-action.
For convenience, borrowing the term from the classical setting,
we refer to elements of $\PU(2,1)$ as \emph{M\"obius transformations} on $\mathcal{H}\cup\set{\infty}$,
and accordingly say that our energy is \emph{M\"obius invariant}.
Just as in the classical case,
we emphasize that the M\"obius invariance of our energy is
not an immediate consequence of the invariance of the integrand alone.
The energy is defined via a regularization process that is not M\"obius invariant,
and it is a priori unclear whether the resulting finite quantity is M\"obius invariant.

After defining the energy,
we prove that $\mathbb{R}$-circles, a distinguished class of Legendrian (un)knots in $\mathcal{H}$,
are the unique minimizers, playing the same role as circles do in the classical theory.
The Heisenberg version of the cosine formula,
which is due to Doyle and Schramm in the classical setting,
is also established by replacing circles with $\mathbb{R}$-circles.

Furthermore, recall from Langevin--O'Hara \cite{Langevin-OHara-05} that
the integrand of the energy in the Euclidean setting admits an interpretation
in terms of a complex-valued 2-form $\comptwoform$ on $(\mathbb{R}^3\times\mathbb{R}^3)\setminus\Delta$
known as the infinitesimal cross ratio \cite{Langevin-OHara-05}.
We establish a partial analog of this interpretation in the Heisenberg setting:
the energy integrand $dp\,dq/\dK(p,q)^2$ can be expressed as the absolute value
of the pullback of a complex-valued 2-form $\comptwoformH$ on $(\mathcal{H}\times\mathcal{H})\setminus\Delta$.
However, the cross-ratio construction does not just extend to $\comptwoformH$;
it requires a more refined approach adapted to the CR-geometric setting.

This paper is organized in the following manner.
Some basic facts and our conventions regarding the Heisenberg group $\mathcal{H}$ are
reviewed in \S\ref{sec:Heisenberg-space}.
In \S\ref{sec:beta-function},
we introduce a Heisenberg analog of Brylinski's beta function \cite{Brylinski-99},
which allows us to define the energy of Legendrian knots as a regularization of a divergent integral.
The M\"obius invariance of the energy is established in \S\ref{sec:Mobius-invariance},
and in \S\ref{sec:R-circles} we characterize $\mathbb{R}$-circles
as those Legendrian knots with least possible energy (namely zero).
After that, the cosine formula is proved in \S\ref{sec:cosine-formula},
and the final section is devoted to further discussion of the energy integrand $dp\,dq/\dK(p,q)^2$,
including the interpretation mentioned in the last paragraph.

The first and second authors are partially supported by JSPS KAKENHI Grant Numbers 24K06738 and 23K03083,
respectively.

\section{The Heisenberg group}
\label{sec:Heisenberg-space}

We recall standard definitions concerning the (3-dimensional) Heisenberg group
$\mathcal{H}=\mathbb{C}\times\mathbb{R}$.
Our main references are Capogna--Danielli--Pauls--Tyson \cite{Capogna-Danielli-Pauls-Tyson-07}
and Folland \cite{Folland-05}.
The standard coordinates of $\mathcal{H}$ will be denoted by $(z,u)$, or $(x,y,u)$, where $z=x+iy$;
we reserve the letter $t$, which is commonly used for the last coordinate, for other purposes.

To be specific, we use the ``exponential coordinates'' on $\mathcal{H}$
(see the ``Notes on Notation'' in \cite{Folland-05}),
which means that the group structure on $\mathcal{H}$ is defined by
\begin{equation}
	\label{eq:group-structure-complex}
	(z,u)\cdot(z',u')=\left(z+z',u+u'-\frac{1}{2}\Im(z\conj{z}')\right)
\end{equation}
or
\begin{equation}
	\label{eq:group-structure}
	(x,y,u)\cdot(x',y',u')=\left(x+x',y+y',u+u'+\frac{1}{2}(xy'-x'y)\right).
\end{equation}
The space $\mathcal{H}$ is equipped with the left-invariant contact distribution $H$
spanned by
\begin{equation}
	\label{eq:horizontal-vector-fields}
	X=\frac{\partial}{\partial x}-\frac{1}{2}y\frac{\partial}{\partial u},\qquad
	Y=\frac{\partial}{\partial y}+\frac{1}{2}x\frac{\partial}{\partial u},
\end{equation}
and moreover, the standard CR structure on $\mathcal{H}$ is defined by the complex span of
\begin{equation}
	\label{eq:complex-horizontal-vector-field}
	Z=\frac{1}{2}(X-iY)=\frac{\partial}{\partial z}-\frac{i}{4}\conj{z}\frac{\partial}{\partial u}.
\end{equation}
Note also that
\begin{equation}
	\label{eq:contact-form}
	\theta=du+\frac{1}{2}(y\,dx-x\,dy)
	=du+\frac{1}{2}\Im(z\,d\conj{z})
\end{equation}
is a left-invariant contact 1-form annihilating $H$.

For $\lambda>0$, the Heisenberg \emph{dilation}
$\delta_\lambda\colon\mathcal{H}\to\mathcal{H}$ is defined by
\begin{equation}
	\delta_\lambda\colon(x,y,u)\mapsto(\lambda x,\lambda y,\lambda^2u).
\end{equation}
Clearly, the group structure and the contact distribution $H$ are both invariant under the dilation,
whereas $(\delta_\lambda)_*X_p=\lambda X_{\delta_\lambda(p)}$
and $(\delta_\lambda)_*Y_p=\lambda Y_{\delta_\lambda(p)}$.

The inner product on $H$, which we write $\braket{\orddot,\orddot}$, is defined so that
$X_p$ and $Y_p$ form an orthonormal basis at each $p\in\mathcal{H}$.
It follows that the inner product is left-invariant and homogeneous of degree $2$ for dilations.

A smooth path $\gamma\colon[a,b]\to\mathcal{H}$ is \emph{Legendrian} (or \emph{horizontal})
if $\dot{\gamma}(t)\in H_{\gamma(t)}$ for all $t\in [a,b]$,
where $H_p$ is the horizontal plane through $p$.
Its length is defined by
\begin{equation}
	\label{eq:length-of-a-path}
	L(\gamma)=\int_a^b\abs{\dot{\gamma}}\,dt,
\end{equation}
where $\abs{\orddot}$ is the norm with respect to $\braket{\orddot,\orddot}$.
These definitions are generalized to piecewise smooth paths as usual.
In this regard, note that the \emph{vertical projection}
$\pi\colon\mathcal{H}\to\mathbb{C}$ given by $(z,u)\mapsto z$ is a convenient tool:
the length of a Legendrian path $\gamma$ is nothing but
the Euclidean length of its projection $\pi\compose\gamma$.
This indicates that
measurements in $\mathcal{H}$ is, in some sense, simpler than those in $\mathbb{R}^3$.

An easy, but important, observation is that
a Legendrian path $\gamma\colon I\to\mathcal{H}$ is determined by
its projection $\pi\compose\gamma$ and an initial point $p=\gamma(t_0)$, where $t_0\in I$.
This is because $t\mapsto(x(t),y(t),u(t))$ is Legendrian if and only if
$\dot{u}+(1/2)(y\dot{x}-x\dot{y})=0$.
To be explicit, the Legendrian lift of the path $t\mapsto(x(t),y(t))$
with prescribed initial point $p=(x(t_0),y(t_0),u(t_0))$ is uniquely given by
\begin{equation}
	\label{eq:u-coordinate-change-along-Legendrian-path}
	u(t)=u(t_0)-\frac{1}{2}\int_{t_0}^t(y(\tau)\dot{x}(\tau)-x(\tau)\dot{y}(\tau))\,d\tau.
\end{equation}

There are two different commonly used left-invariant distance functions on $\mathcal{H}$.
One of them is the \emph{Carnot--Carath\'eodory distance}, which is defined by
\begin{equation}
	\label{eq:Carnot-Caratheodory-distance}
	\dCC(p,q)=\inf_\gamma L(\gamma),
\end{equation}
where $\gamma$ ranges over all piecewise smooth paths from $p$ to $q$.
In this paper, however, we exclusively use the other one, which is known as the \emph{Kor\'anyi distance}:
\begin{equation}
	\dK(p,q)=\norm{p^{-1}q}_{\mathcal{H}},
	\qquad\text{where}\qquad
	\normH{(x,y,u)}=\sqrt[4]{(x^2+y^2)^2+16u^2}.
\end{equation}
Since it is the only distance function on $\mathcal{H}$ used in this paper, we simply denote it by $\dK$.

\begin{lem}
	\label{lem:infinitesimal-Koranyi-distance}
	If $\gamma$ is Legendrian, then
	\begin{equation}
		\left.\frac{d}{dt}\dK(\gamma(t_0),\gamma(t))\right|_{t=t_0}=\abs{\dot{\gamma}(t_0)}.
	\end{equation}
\end{lem}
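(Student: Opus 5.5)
The plan is to reduce to the case $\gamma(t_0)=0$ by left translation, and then read off the derivative from a Taylor expansion of the coordinates of $\gamma$. The left-hand side is read as the one-sided derivative from $t>t_0$; equivalently, we show $\dK(\gamma(t_0),\gamma(t))=\abs{\dot\gamma(t_0)}\,\abs{t-t_0}+o(\abs{t-t_0})$. A two-sided derivative of a nonnegative function vanishing at $t_0$ can only be zero, so this reading is forced.

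\textbf{Reduction.} Let $L_g$ denote left translation by $g=\gamma(t_0)^{-1}$.
\begin{itemize}
\item $L_g$ preserves the contact distribution $H$, since $X$ and $Y$ are left-invariant. Hence $L_g\compose\gamma$ is again Legendrian.
\item $L_g$ preserves the inner product $\braket{\orddot,\orddot}$, so $\abs{\tfrac{d}{dt}(L_g\compose\gamma)}=\abs{\dot\gamma}$.
\item $\dK$ is left-invariant by definition, so the distance function in the statement is unchanged.
\end{itemize}
We may therefore assume $\gamma(t_0)=0$. Write $\gamma(t)=(z(t),u(t))$ and $s=t-t_0$, so that $z(t_0)=0$ and $u(t_0)=0$. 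Since the length of a Legendrian path equals the Euclidean length of its projection, $\abs{\dot\gamma(t_0)}=\abs{\dot z(t_0)}$. Set $a=\dot z(t_0)$.

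\textbf{Order of the vertical coordinate.} The Legendrian condition $\theta(\dot\gamma)=0$ reads $\dot u=-\tfrac12\Im(z\,\dot{\conj z})$. We have $z=as+O(s^2)$ and $\dot z=a+O(s)$. Hence $z\,\dot{\conj z}=\abs{a}^2s+O(s^2)$, whose leading term is real, so $\Im(z\,\dot{\conj z})=O(s^2)$. Integrating from $t_0$ gives $u(t)=O(s^3)$.

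\textbf{Expansion of the distance.} Now
\begin{equation}
	\dK(0,\gamma(t))^4=\abs{z(t)}^4+16u(t)^2=\abs{a}^4s^4+O(s^5)+O(s^6).
\end{equation}
If $a\neq0$, taking fourth roots gives $\dK=\abs{a}\abs{s}\,(1+O(s))^{1/4}=\abs{a}\abs{s}+O(s^2)$. If $a=0$, then $\abs{z}^4=O(s^8)$, so $\dK=O(\abs{s}^{3/2})=o(s)$. In both cases $\dK(\gamma(t_0),\gamma(t))=\abs{\dot\gamma(t_0)}\abs{s}+o(s)$, which is the claim.

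The only point requiring care is the cancellation $\Im(\abs{a}^2s)=0$, which makes $u=O(s^3)$ rather than $O(s^2)$. A bound $u=O(s^2)$ would contribute a term $16u^2$ of order $s^4$, comparable to $\abs{z}^4$, and the first-order behaviour would depend on a second-order term rather than on $\abs{\dot\gamma(t_0)}$ alone. This cancellation is exactly where the Legendrian hypothesis enters.
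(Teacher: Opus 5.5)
Your proof is correct and follows the paper's argument: left-translate so that $\gamma(t_0)$ is the origin, use the Legendrian condition to get $u=O(s^3)$, and read off the first-order term of $(\abs{z}^4+16u^2)^{1/4}$. Two points you make explicit are left implicit in the paper: the derivative must be read as one-sided, and the degenerate case $\dot\gamma(t_0)=0$ needs separate treatment.
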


\begin{proof}
	We may assume $t_0=0$.
	Moreover, since $\dK$ and $\abs{\orddot}$ are left-invariant, we may assume without losing generality
	that $\gamma(t_0)=(0,0,0)$.
	Then, \eqref{eq:u-coordinate-change-along-Legendrian-path} implies that $u(t)=O(t^3)$,
	and hence
	\begin{equation}
		(x(t)^2+y(t)^2)^2+16u(t)^2=t^4(\dot{x}(0)^2+\dot{y}(0)^2)^2+O(t^5).
	\end{equation}
	Consequently,
	\begin{equation}
		\left.\frac{d}{dt}\dK(\gamma(0),\gamma(t))\right|_{t=0}
		=\left.\frac{d}{dt}\normH{\gamma(t)}\right|_{t=0}
		=\sqrt{\dot{x}(0)^2+\dot{y}(0)^2}
		=\abs{\dot{\gamma}(0)}.\qedhere
	\end{equation}
\end{proof}

\begin{cor}
	\label{cor:length-in-terms-of-Koranyi-distance}
	The length of a Legendrian path $\gamma\colon I\to\mathcal{H}$
	defined by the Kor\'anyi distance coincides with \eqref{eq:length-of-a-path}, i.e.,
	\begin{equation}
		L(\gamma)=\inf\sum_i\dK(\gamma(t_i),\gamma(t_{i+1})),
	\end{equation}
	where the infimum is taken over all partitions of $I$.
\end{cor}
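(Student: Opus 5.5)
The plan is to prove the two inequalities between $L(\gamma)=\int_I\abs{\dot\gamma}\,dt$ and the metric length $\ell(\gamma):=\sup\sum_i\dK(\gamma(t_i),\gamma(t_{i+1}))$. Although the statement writes $\inf$, the metric length is the supremum over partitions, or equivalently the limit as the mesh tends to zero; I will prove that the sums converge to $L(\gamma)$ as the mesh goes to zero. It suffices to treat a compact interval $I=[a,b]$ with $\gamma$ smooth, and then to pass to piecewise smooth paths and exhaustions in the usual way.

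\emph{Step 1: uniform version of Lemma~\ref{lem:infinitesimal-Koranyi-distance}.} By compactness, the $C^2$ norm of $\gamma$ on $[a,b]$ is bounded. Fix $s\in[a,b]$ and left-translate by $\gamma(s)^{-1}$, which preserves $\dK$ and $\abs{\orddot}$. Write $\gamma(s)^{-1}\gamma(s+h)=(x,y,u)$. By the group law \eqref{eq:group-structure}, the $(x,y)$ part is $\pi\gamma(s+h)-\pi\gamma(s)=h\,\pi\dot\gamma(s)+O(h^2)$. Applying \eqref{eq:u-coordinate-change-along-Legendrian-path} to the translated Legendrian path, which starts at the origin, gives $u=O(h^3)$. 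In both estimates the constants depend only on the $C^2$ bound, so they are uniform in $s$. Hence
\begin{equation}
	\dK(\gamma(s),\gamma(s+h))=\sqrt[4]{\abs{\pi(\gamma(s+h))-\pi(\gamma(s))}^4+O(h^6)}.
\end{equation}
Since $\abs{\pi(\gamma(s+h))-\pi(\gamma(s))}\le C\abs{h}$, this equals $\abs{h}\,\abs{\dot\gamma(s)}+o(h)$ uniformly in $s$.

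The $o(h)$ claim needs care where $\abs{\dot\gamma(s)}$ is small, since there the fourth root is not Lipschitz in its argument. I will handle it with the elementary bound $\bigl|\sqrt[4]{A+B}-\sqrt[4]{A}\bigr|\le\abs{B}^{1/4}$ for $A\ge0$ and $A+B\ge0$. This gives an error of $O(\abs{h}^{3/2})$ plus the Euclidean error $O(h^2)$, which is $o(h)$.

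\emph{Step 2: Riemann sums.} Take a partition with mesh $\delta$. Summing Step 1 gives
\begin{equation}
	\sum_i\dK(\gamma(t_i),\gamma(t_{i+1}))=\sum_i\abs{\dot\gamma(t_i)}(t_{i+1}-t_i)+O(\delta^{1/2})(b-a).
\end{equation}
The right-hand side tends to $L(\gamma)$ as $\delta\to0$. Since refining a partition does not decrease the sum, by the triangle inequality for $\dK$, the supremum equals this limit. This proves $L(\gamma)=\sup_{\text{partitions}}\sum_i\dK(\gamma(t_i),\gamma(t_{i+1}))$.

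The main obstacle is the uniformity in Step 1 near points where $\dot\gamma$ vanishes. It is resolved by the crude fourth-root estimate, which works precisely because the vertical displacement along a Legendrian path is $O(h^3)$ rather than $O(h^2)$; this is the same mechanism used in Lemma~\ref{lem:infinitesimal-Koranyi-distance}.
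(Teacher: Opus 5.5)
Your argument is correct and is essentially the derivation the paper intends. The paper gives no written proof; it presents the statement as an immediate consequence of Lemma~\ref{lem:infinitesimal-Koranyi-distance}, which rests on the same fact you use, that the vertical displacement along a Legendrian path is $O(h^3)$. Your Step~1 supplies the uniformity in $s$ needed to pass from that pointwise lemma to the Riemann-sum limit, including at points where $\dot\gamma$ vanishes, via the bound $\bigl|\sqrt[4]{A+B}-\sqrt[4]{A}\bigr|\le\abs{B}^{1/4}$. You are also right that the $\inf$ in the statement must be read as a $\sup$ (equivalently, a limit as the mesh tends to zero): by the triangle inequality, the infimum over partitions is just $\dK(\gamma(a),\gamma(b))$.
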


The crucial advantage of the Kor\'anyi distance over the Carnot--Carath\'eodory distance
is its simple behavior under the Heisenberg \emph{inversion}
\begin{equation}
	\label{eq:inversion}
	\iota\colon
	\mathcal{H}\cup\set{\infty}\to\mathcal{H}\cup\set{\infty},\qquad
	(z,u)\mapsto\left(\frac{z}{\abs{z}^2+4iu},-\frac{u}{\abs{z}^4+16u^2}\right).
\end{equation}
Namely, for $p$, $q\in\mathcal{H}\setminus\set{(0,0,0)}$,
\begin{equation}
	\label{eq:inversion-formula-for-Koranyi-distance}
	\dK(\iota(p),\iota(q))=\frac{\dK(p,q)}{\norm{p}_{\mathcal{H}}\norm{q}_{\mathcal{H}}}.
\end{equation}
By Lemma \ref{lem:infinitesimal-Koranyi-distance}, we can deduce the infinitesimal version
of \eqref{eq:inversion-formula-for-Koranyi-distance}:
if $\gamma$ is a Legendrian path not passing through the origin in $\mathcal{H}$,
then $\tilde{\gamma}=\iota\compose\gamma$ satisfies
\begin{equation}
	\label{eq:inversion-formula-for-horizontal-vector-norm}
	\abs{\dot{\tilde{\gamma}}(t)}=\frac{\abs{\dot{\gamma}(t)}}{\norm{\gamma(t)}_\mathcal{H}^2}.
\end{equation}

Next, we recall how we can regard the standard 3-sphere $S^3$,
which is the boundary of the unit ball $B^2_{\mathbb{C}}$ in $\mathbb{C}^2$,
as the one-point compactification of $\mathcal{H}$.
First, the complex unit ball $B^2_{\mathbb{C}}$ is biholomorphic to the \emph{Siegel domain}
\begin{equation}
	\label{eq:Siegel-domain}
	D=\set{(z,w)\in\mathbb{C}^2|\Im w>\abs{z}^2}
\end{equation}
via the \emph{Cayley transform}
\begin{equation}
	\label{eq:Cayley}
	\Phi_C\colon B^2_{\mathbb{C}}\to D,\qquad
	(\zeta^1,\zeta^2)\mapsto\left(\frac{\zeta^1}{i(\zeta^2-1)},\frac{\zeta^2+1}{i(\zeta^2-1)}\right),
\end{equation}
which extends continuously to the CR diffeomorphism $S^3\setminus\set{(0,1)}\cong\bdry D$.
We then identify $\mathcal{H}$ and $\bdry D$ by
\begin{equation}
	\label{eq:identification-of-Heisenberg-and-Siegel-boundary}
	\mathcal{H}\to\bdry D,\qquad
	(z,u)\mapsto (z,-4u+i\abs{z}^2).
\end{equation}
By introducing the point $\infty$ corresponding to $(0,1)\in S^3$,
we obtain an identification of $S^3$ and $\mathcal{H}\cup\set{\infty}$.

A consequence from the above construction is that the one-point compactification $\mathcal{H}\cup\set{\infty}$
carries a CR structure induced by the complex structure of $\mathbb{C}^2$
(the CR structure we introduced earlier on $\mathcal{H}$ is nothing but the restriction of this one).
Therefore, it is natural to seek invariant quantities for Legendrian paths or knots
in $S^3=\mathcal{H}\cup\set{\infty}$ under the action of CR automorphisms.
To describe these automorphisms,
we embed the complex 2-space $\mathbb{C}^2$ into $\mathbb{C}P^2$ by
$(z,w)\mapsto[1:z:w]$
and consider the projective actions of matrices in $\GL(3,\mathbb{C})$, i.e.,
complex fractional linear transformations.
It is well known that the group of CR automorphisms of $S^3$ consists precisely of those transformations
given by the matrices preserving the indefinite Hermitian form
\begin{equation}
	I_{B^2_{\mathbb{C}}}=
	\begin{pmatrix}
		-1 & 0 & 0 \\
		0 & 1 & 0 \\
		0 & 0 & 1
	\end{pmatrix}.
\end{equation}

For computational purposes, it is more convenient to work in the Heisenberg coordinates
rather than regarding $\mathcal{H}\cup\set{\infty}$ as a 3-sphere.
Since the Cayley transform \eqref{eq:Cayley} is a fractional linear transformation
given by the matrix
\begin{equation}
	C=
	\begin{pmatrix}
		-i & 0 & i \\
		0 & 1 & 0 \\
		1 & 0 & 1
	\end{pmatrix},
\end{equation}
the indefinite Hermitian form given by
\begin{equation}
	I_D=\smash{\transpose{C}}^{-1}I_{B^2_{\mathbb{C}}}\smash{\conj{C}}^{-1}=
	\begin{pmatrix}
		0 & 0 & -i/2 \\
		0 & 1 & 0 \\
		i/2 & 0 & 0
	\end{pmatrix}
\end{equation}
corresponds to the CR automorphisms of $\bdry D\cup\set{\infty}$.
We use the notation $\PU(2,1)$ for the group of fractional linear transformations that respect $I_D$.
By observing the identification \eqref{eq:identification-of-Heisenberg-and-Siegel-boundary},
we regard this group as the CR automorphism group of $\mathcal{H}\cup\set{\infty}$:
\begin{equation}
	\PU(2,1)=\Aut_{\mathrm{CR}}(\mathcal{H}\cup\set{\infty}).
\end{equation}
We simply call its elements
as \emph{M\"obius transformations} on $\mathcal{H}\cup\set{\infty}$,
or on $\mathcal{H}$ when there is no fear of confusion,
in analogy with the classical M\"obius transformations acting on Euclidean space.

It is not so tedious to check that
the group $\PU(2,1)$ is generated by the dilations $\delta_\lambda$,
the Heisenberg translations (the left multiplications in $\mathcal{H}$),
the rotations $R_\theta\colon(z,u)\mapsto(e^{i\theta}z,u)$,
and the inversion $\iota$ defined by \eqref{eq:inversion}.

\section{The beta function associated with Legendrian knots}
\label{sec:beta-function}

A \emph{Legendrian knot} in $\mathcal{H}$ is a closed Legendrian path without self-intersections.
We denote it by $K$ when regarded as a subset of $\mathcal{H}$,
and by $\gamma\colon I\to\mathcal{H}$ when regarded as a parametrized curve
(we understand that $I$ is a bounded closed interval).
We always consider smooth knots,
and unless otherwise stated, we assume that they are parametrized by arc-length $s$.
It is convenient to extend $\gamma\colon I\to\mathcal{H}$ to
a smooth periodic mapping $\gamma\colon\mathbb{R}\to\mathcal{H}$ with period $L=L(\gamma)$.

For such a Legendrian knot $K$,
\eqref{eq:inversion-formula-for-Koranyi-distance} and \eqref{eq:inversion-formula-for-horizontal-vector-norm}
imply that the 2-form on $(K\times K)\setminus\Delta$
(where $\Delta$ denotes the diagonal) given by
\begin{equation}
	\label{eq:invariant-2-form-on-KxK}
	\frac{dp\,dq}{\dK(p,q)^2},
\end{equation}
by which we mean
\begin{equation}
	\frac{ds\wedge ds'}{\dK(\gamma(s),\gamma(s'))^2},
\end{equation}
is invariant under the diagonal action of $\PU(2,1)$.
Therefore, one might expect that we can obtain a M\"obius invariant quantity
by integrating \eqref{eq:invariant-2-form-on-KxK} over $K\times K$,
but the singularity of \eqref{eq:invariant-2-form-on-KxK} along the diagonal
makes the integral diverge, and regularization is required.
In this section, we carry out this regularization
following Brylinski's method \cite{Brylinski-99} in the classical Euclidean theory.
Other methods will be discussed in the next section.

We define the \emph{beta function} of a Legendrian knot $K$ by
\begin{equation}
	B_K(\zeta)=\iint_{K\times K}\dK(p,q)^\zeta\,dp\,dq
\end{equation}
for $\Re\zeta>-1$ (so that the integral converges).
Alternatively, let $\Psi\colon[0,\infty)\to[0,\infty)$ be
the cumulative distribution function of the interpoint distance
\begin{equation}
	\Psi(t)=\mu_{K\times K}(\set{(p,q)\in K\times K|\dK(p,q)<t}),
\end{equation}
$\mu_{K\times K}$ being the product measure induced by the arc-length measure $\mu_K$ on $K$.
Then we can express $B_K(\zeta)$ as
\begin{equation}
	\label{eq:Gamma-function-as-Stieltjes-integral}
	B_K(\zeta)=\int_0^\infty t^\zeta\,d\Psi(t)
\end{equation}
as a Riemann--Stieltjes integral.

We also define
\begin{equation}
	\Psi^{\mathrm{loc}}_p(t)
	=\Length(K\cap\BK(p;t))
	=\mu_K(\set{q\in K|\dK(p,q)<t})
\end{equation}
for any fixed $p\in K$ and $t>0$,
where $\BK(p;t)$ denotes the Kor\'anyi ball. Then,
\begin{equation}
	\label{eq:CDF-as-integral}
	\Psi(t)=\int_K\Psi^\mathrm{loc}_p(t)\,dp.
\end{equation}
The following result for $\Psi^\mathrm{loc}_p$,
which is an analogue of Proposition 3.1 (i) in O'Hara--Solanes \cite{OHara-Solanes-18},
is crucial for the subsequent discussion of the meromorphic continuation of $B_K$.

\begin{prop}
	\label{prop:smooth-extendability-of-local-interpoint-distance-distribution-function}
	Let $K$ be a Legendrian knot and $p\in K$.
	Then, for sufficiently small $\varepsilon>0$,
	$\Psi^\mathrm{loc}_p$ can be extended to a smooth odd function
	defined on the interval $(-\varepsilon,\varepsilon)$.
	Furthermore, $\varepsilon$ can be taken independently of $p$.
\end{prop}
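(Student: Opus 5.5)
Since $\dK$, arc length and the Legendrian condition are left-invariant, we may translate so that $p=\gamma(0)=(0,0,0)$. Write $f(s)=\dK(\gamma(0),\gamma(s))=\normH{\gamma(s)}$ for $s$ near $0$. For small $t$, the set $\set{q\in K|\dK(p,q)<t}$ is, by compactness and injectivity of $\gamma$ on a period, a small arc $\set{\gamma(s)|s_-(t)<s<s_+(t)}$ with $s_-(t)<0<s_+(t)$. Hence $\Psi^\mathrm{loc}_p(t)=s_+(t)-s_-(t)$. The task is therefore to show that $s_\pm$ are smooth in $t$ near $0$ after suitable extension, and that their difference is odd.

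\emph{Step 1: a smooth square-root-free form of $f$.} Let $\gamma(s)=(z(s),u(s))$. By \eqref{eq:u-coordinate-change-along-Legendrian-path}, $u(s)=O(s^3)$ and $\abs{z(s)}^2=s^2(1+O(s))$, using $\abs{\dot{\gamma}}=1$. Then
\begin{equation}
	f(s)^4=\abs{z(s)}^4+16u(s)^2=s^4\,g(s),
\end{equation}
where $g$ is smooth with $g(0)=1$. Indeed $\abs{z}^4/s^4=(\abs{z}^2/s^2)^2$ and $16u^2/s^4=16(u/s^2)^2$ are both smooth, since $z(s)/s$ and $u(s)/s^2$ are smooth (Hadamard's lemma). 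Hence
\begin{equation}
	F(s):=s\,g(s)^{1/4}
\end{equation}
is a smooth function near $0$ with $F'(0)=1$, and $f(s)=\abs{F(s)}$. By the inverse function theorem, $F$ is a diffeomorphism from $(-\delta,\delta)$ onto a neighborhood of $0$. For small $t>0$ we get $s_+(t)=F^{-1}(t)$ and $s_-(t)=F^{-1}(-t)$. This requires that $f$ be monotone on each side of $0$ within the arc and that no far-away part of $K$ enters the ball; both follow from $F'>0$ near $0$ and from the positive lower bound on $\dK(\gamma(0),\gamma(s))$ for $\delta\le\abs{s}\le L-\delta$.

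\emph{Step 2: conclusion.} Define
\begin{equation}
	\Psi(t):=F^{-1}(t)-F^{-1}(-t)
\end{equation}
for $\abs{t}<\varepsilon$. This is smooth, manifestly odd, and agrees with $\Psi^\mathrm{loc}_p$ for $0<t<\varepsilon$. It also extends continuously to $t=0$ with value $0$, which matches $\Psi^\mathrm{loc}_p(0)=0$.

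\emph{Step 3: uniformity in $p$, the main obstacle.} We need a single $\varepsilon$ that works for all $p$. We will treat $p=\gamma(\sigma)$ as a parameter. The map $(\sigma,s)\mapsto\gamma(\sigma)^{-1}\gamma(\sigma+s)$ is smooth and periodic in $\sigma$, so the constructions above depend smoothly on $\sigma$. In particular $g(\sigma,s)$ is jointly smooth with $g(\sigma,0)=1$: the Hadamard quotients are given by integral formulas and are therefore smooth in parameters. Compactness of the circle $\sigma\in\mathbb{R}/L\mathbb{Z}$ then yields:
\begin{enumerate}
	\item a uniform $\delta$ with $\partial_sF(\sigma,s)\ge 1/2$ for $\abs{s}<\delta$;
	\item a uniform lower bound $c>0$ for $\dK(\gamma(\sigma),\gamma(\sigma+s))$ when $\delta\le\abs{s}\le L-\delta$, using embeddedness of $K$;
	\item a uniform $\varepsilon$, taken as the minimum of $c$ and $\min_\sigma\abs{F(\sigma,\pm\delta)}$, so that the arc description and the inversion are valid for all $p$ simultaneously.
\end{enumerate}
The delicate point is to ensure that $g>0$ and the inverse function domains do not shrink as $\sigma$ varies. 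This is handled by the joint smoothness and the compactness argument just described.
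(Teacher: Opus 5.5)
Your proposal is correct and follows essentially the same route as the paper. You factor $\dK(p,\gamma(s))^4=s^4g(s)$ with $g$ smooth and $g(0)=1$, take the signed chord length $s\,g(s)^{1/4}$ (the paper's $\ch$), invert it to get $\Psi^{\mathrm{loc}}_p(t)=\ch^{-1}(t)-\ch^{-1}(-t)$, and obtain uniformity from continuous dependence on $p$ plus compactness. You are slightly more explicit than the paper, which leaves implicit both the joint smoothness of the Hadamard quotients in the base point and the lower bound on $\dK$ away from the diagonal that keeps distant arcs of $K$ out of the small Kor\'anyi ball.
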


To show this, let us parametrize the knot $K$ by $\gamma\colon I\to\mathcal{H}$ and let $p=\gamma(s_0)$.
After extending $\gamma$ periodically,
we define the \emph{signed Kor\'anyi chord length function} (or simply the \emph{signed chord length function})
$\ch\colon\mathbb{R}\to\mathbb{R}$ by
\begin{equation}
	\label{eq:signed-chord-length}
	\ch(s)=
	\begin{cases}
		\dK(p,\gamma(s)), &\qquad s>s_0,\\
		0,&\qquad s=s_0,\\
		-\dK(p,\gamma(s)), &\qquad s<s_0.
	\end{cases}
\end{equation}
Then, Proposition \ref{prop:smooth-extendability-of-local-interpoint-distance-distribution-function} follows
once we show the following lemma,
which means that $\ch(s)$ can also be used to parametrize $K$ near $p$.

\begin{lem}
	\label{lem:signed-Koranyi-chord-length-as-a-parametrization}
	In a sufficiently small neighborhood of $s_0\in\mathbb{R}$, $\ch$ is a smooth function satisfying $\ch'(s)>0$.
	Moreover, the size of such a neighborhood in $\mathbb{R}$ can be taken uniformly for all $p\in K$.
\end{lem}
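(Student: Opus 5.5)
We reduce to a normal form and write $\ch$ as $s$ times a smooth positive function.
By left-invariance of $\dK$ and of the horizontal norm, and by the rotations $R_\theta$ (which preserve $\dK$), we may assume $p=\gamma(s_0)=(0,0,0)$, $s_0=0$ and $\dot\gamma(0)=X_0$. Write $\gamma(s)=(z(s),u(s))$. Since $\gamma$ is Legendrian, \eqref{eq:u-coordinate-change-along-Legendrian-path} gives
\begin{equation}
	u(s)=-\frac{1}{2}\int_0^s\Im\bigl(z(\tau)\conj{\dot z}(\tau)\bigr)\,d\tau .
\end{equation}
Because $z(\tau)=\tau\,a(\tau)$ with $a$ smooth and $a(0)=1$, we have $\Im(z\conj{\dot z})=\tau^2 b(\tau)$ with $b$ smooth; this uses $\Im(a\conj a)=0$ at order $\tau$. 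Hence $u(s)=s^3c(s)$ with $c$ smooth. Also $\abs{z(s)}^2=s^2\abs{a(s)}^2$. Therefore
\begin{equation}
	\normH{\gamma(s)}^4=s^4\Bigl(\abs{a(s)}^4+16s^2c(s)^2\Bigr)=s^4F(s),
\end{equation}
where $F$ is smooth and $F(0)=1$. On a neighborhood of $0$ where $F>0$, we get $\ch(s)=s\,F(s)^{1/4}$. This holds for both signs of $s$, which is exactly why the signed version is used: for $s<0$ the factor $\abs{s}$ becomes $-s$ times the same root. So $\ch$ is smooth there, and $\ch'(0)=1$, consistent with Lemma \ref{lem:infinitesimal-Koranyi-distance}. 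By continuity $\ch'>0$ on a smaller neighborhood.

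The main obstacle is uniformity in $p$. Both $F$ and the derivative $\ch'(s)=F^{1/4}+\tfrac{1}{4}sF^{-3/4}F'$ depend on the base point $s_0$. I would make this dependence explicit. Set
\begin{equation}
	a(s_0;s)=\int_0^1 e^{-i\theta(s_0)}\dot z(s_0+\sigma s)\,d\sigma ,
\end{equation}
where $e^{i\theta(s_0)}=\dot z(s_0)$, and define $b$ and $c$ analogously through Taylor-formula integrals of derivatives of the translated curve. These are then jointly smooth in $(s_0,s)\in\mathbb{R}^2$ and $L$-periodic in $s_0$; the rotation angle is smooth because $\abs{\dot z}=1$ under arc-length parametrization.

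Consequently $F(s_0;s)$ and $\partial_sF$ are continuous on $[0,L]\times[-1,1]$, with $F(s_0;0)=1$. Compactness gives a $\delta>0$ such that $F\ge 1/2$ and $\abs{s\,\partial_sF}$ is small for $\abs{s}<\delta$, uniformly in $s_0$. Hence $\ch'\ge c_0>0$ on $(s_0-\delta,s_0+\delta)$ for every $p$. We also take $\delta<L/2$ so that the periodic extension causes no interference near $s_0$.

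Although Proposition \ref{prop:smooth-extendability-of-local-interpoint-distance-distribution-function} is not being proved here, the reason the lemma suffices is worth noting. Injectivity of $\gamma$ on a period, together with compactness, gives $\dK(p,\gamma(s))\ge\eta>0$ for $\delta\le\abs{s-s_0}\le L/2$, uniformly in $s_0$. So for $t<\min(\eta,\ch(s_0+\delta),-\ch(s_0-\delta))$ we have $\Psi^{\mathrm{loc}}_p(t)=\ch^{-1}(t)-\ch^{-1}(-t)$, which is smooth and odd.
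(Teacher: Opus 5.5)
Your proof is correct and follows essentially the same route as the paper. You factor $z$ as $s$ times a smooth function and $u$ as $s^3$ times a smooth function, so that $\ch(s)=s\,F(s)^{1/4}$ with $F(0)=1$, and get uniformity from continuous dependence on the base point; your joint smoothness in $(s_0,s)$ plus compactness argument makes precise what the paper states in one line.
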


Namely, once Lemma \ref{lem:signed-Koranyi-chord-length-as-a-parametrization} is established,
then the inverse function $\ch^{-1}$ is well-defined near $0\in\mathbb{R}$.
We then have, for sufficiently small $t>0$ (with the bound independent of $p$),
\begin{equation}
	\Psi^\mathrm{loc}_p(t)=\Length(K\cap\BK(p;t))=\ch^{-1}(t)-\ch^{-1}(-t),
\end{equation}
and hence $\Psi^\mathrm{loc}_p$ extends to the smooth odd function $t\mapsto\ch^{-1}(t)-\ch^{-1}(-t)$.

\begin{proof}[Proof of Lemma \ref{lem:signed-Koranyi-chord-length-as-a-parametrization}]
	By reparametrizing $K$ we may assume that $s_0=0$,
	and by left translations we may also assume that $p=\gamma(0)=(0,0,0)$.
	Let
	\begin{equation}
		\gamma(s)=(x(s),y(s),u(s)),
	\end{equation}
	by which we have
	\begin{equation}
		\dK(p,\gamma(s))=\normH{\gamma(s)}=\sqrt[4]{(x(s)^2+y(s)^2)^2+16u(s)^2}.
	\end{equation}
	Since $x(0)=y(0)=0$, we may write
	\begin{equation}
		x(s)=s\varphi(s),\qquad y(s)=s\psi(s),
	\end{equation}
	where $\varphi$ and $\psi$ are smooth functions satisfying $\varphi(0)^2+\psi(0)^2=1$.
	Furthermore, as in Lemma \ref{lem:infinitesimal-Koranyi-distance} we obtain $u(s)=O(s^3)$ as $s\to 0$,
	so we are able to write $u(s)=s^3\chi(s)$, where $\chi$ is a smooth function. Then,
	\begin{equation}
		\label{eq:Koranyi-distance-in-terms-of-arc-length}
		\dK(p,\gamma(s))
		=\sqrt[4]{s^4(\varphi(s)^2+\psi(s)^2)^2+16s^6\chi(s)^2}
		=\abs{s}\sqrt[4]{(\varphi(s)^2+\psi(s)^2)^2+16s^2\chi(s)^2}.
	\end{equation}
	Consequently, the signed chord length $\ch(s)$ is given by
	\begin{equation}
		\ch(s)=s\sqrt[4]{(\varphi(s)^2+\psi(s)^2)^2+16s^2\chi(s)^2},
	\end{equation}
	which is smooth and $\ch'(s)>0$ near $s=0$.
	The size of the neighborhood in which $\ch$ has these desired properties
	are determined by $\varphi(s)$, $\psi(s)$, $\chi(s)$, and their derivatives,
	which vary continuously in $p$.
	Therefore, the size of such a neighborhood can be chosen uniformly for all $p$.
\end{proof}

From Proposition \ref{prop:smooth-extendability-of-local-interpoint-distance-distribution-function}
and \eqref{eq:CDF-as-integral}, it follows that
$\Psi(t)$ also extends to a smooth odd function defined in $(-\varepsilon,\varepsilon)$.
Consequently, we can write
\begin{equation}
	B_K(\zeta)=\int_0^{\varepsilon/2}t^\zeta\Psi'(t)\,dt+(\text{an entire function}),
\end{equation}
and the conclusion below follows by standard arguments.

\begin{cor}
	By analytic continuation, $B_K(\zeta)$ extends to
	a meromorphic function defined on the whole complex plane,
	with at most simple poles at $\zeta=-1$, $-3$, $-5$, $\dotsc$.
\end{cor}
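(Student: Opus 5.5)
The plan is to split the Stieltjes integral \eqref{eq:Gamma-function-as-Stieltjes-integral} at a small threshold and then use the Taylor expansion of $\Psi'$ at $0$. By Proposition \ref{prop:smooth-extendability-of-local-interpoint-distance-distribution-function} and \eqref{eq:CDF-as-integral}, $\Psi$ coincides on $[0,\varepsilon)$ with a smooth odd function. Differentiation under the integral is legitimate, since $\varepsilon$ is uniform in $p$ and the local data depend smoothly on $p$. Hence $\Psi'$ is smooth and even on $(-\varepsilon,\varepsilon)$.

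First I write
\begin{equation}
	B_K(\zeta)=\int_0^{\varepsilon/2}t^\zeta\Psi'(t)\,dt+\int_{[\varepsilon/2,\infty)}t^\zeta\,d\Psi(t).
\end{equation}
The second term is entire in $\zeta$. The measure $d\Psi$ is finite, with total mass $L^2$, and is supported in $[\varepsilon/2,\operatorname{diam}K]$, where $t^\zeta$ is bounded and holomorphic in $\zeta$ locally uniformly. So holomorphy follows by differentiating under the integral, or by Morera's theorem.

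For the first term, I fix $N$ and Taylor expand the even function $\Psi'$:
\begin{equation}
	\Psi'(t)=\sum_{k=0}^{N}a_{2k}t^{2k}+R_N(t),\qquad \abs{R_N(t)}\le C_Nt^{2N+2}.
\end{equation}
Only even powers appear because $\Psi'$ is even; this is the whole point of the oddness in Proposition \ref{prop:smooth-extendability-of-local-interpoint-distance-distribution-function}. Then
\begin{equation}
	\int_0^{\varepsilon/2}t^\zeta\Psi'(t)\,dt=\sum_{k=0}^{N}a_{2k}\frac{(\varepsilon/2)^{\zeta+2k+1}}{\zeta+2k+1}+\int_0^{\varepsilon/2}t^\zeta R_N(t)\,dt.
\end{equation}
The remainder integral converges absolutely and is holomorphic for $\Re\zeta>-2N-3$. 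Each summand is meromorphic on $\mathbb{C}$, with at most a simple pole at $\zeta=-(2k+1)$, of residue $a_{2k}$. This identity holds for $\Re\zeta>-1$, so it furnishes the analytic continuation to $\Re\zeta>-2N-3$. By uniqueness of analytic continuation these continuations agree as $N$ varies. Letting $N\to\infty$ gives a meromorphic function on $\mathbb{C}$ whose only possible poles are simple ones at $\zeta=-1,-3,-5,\dotsc$.

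The only delicate step is the justification that $\Psi$ itself, not just each $\Psi^{\mathrm{loc}}_p$, is smooth and odd near $0$. This needs the smooth dependence of $\ch^{-1}$ on $p$ jointly in $(p,t)$. I would handle it by applying the implicit function theorem to $(s_0,s)\mapsto\ch(s)$ written as $s\cdot(\text{smooth positive function of }(s_0,s))$, as in the proof of Lemma \ref{lem:signed-Koranyi-chord-length-as-a-parametrization}. After that, everything is the standard Mellin-transform argument.
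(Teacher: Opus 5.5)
Your proposal is correct and follows the paper's argument: split off an entire piece supported away from $0$, then integrate the Taylor expansion of the even function $\Psi'$ term by term; this is the ``standard argument'' the paper invokes without detail. Your extra justification that $\Psi$ itself, and not only each $\Psi^{\mathrm{loc}}_p$, is smooth and odd near $0$ fills in a step the paper passes over quickly. You obtain it from joint smoothness of $(s_0,t)\mapsto\ch^{-1}(t)$ via the implicit function theorem.
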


The final assertion of the corollary follows from
\begin{equation}
	\label{eq:residue-formula}
	\Res_{\zeta=-1-k}B_K(\zeta)=\frac{(-1)^k}{k!}\Psi^{(1+k)}(0),\qquad
	k=0,\,1,\,2,\,\dotsc.
\end{equation}

Explicit formulae for the residues at $\zeta=-1$, $-3$, $-5$, $\dotsc$ can be obtained
by following the steps given in the proof of
Lemma \ref{lem:signed-Koranyi-chord-length-as-a-parametrization} as detailed below.
Let $\overline{\gamma}=\pi\compose\gamma$ be the vertical projection of $\gamma$.
To compute $\Psi^\mathrm{loc}_p(t)$ for any fixed $p\in K$,
we may assume $\gamma(0)=p$ and $\overline{\gamma}'(0)=(1,0)$ without losing generality.
If $\kappa$ is the signed curvature of $\overline{\gamma}$, we have the asymptotic expansions
\begin{align}
	\label{eq:x-coord-of-legendrian-curves}
	x(s)
	&\sim s-\kappa^2s^3-\frac{\kappa\kappa'}{8}s^4
	+\frac{\kappa^4-4\kappa\kappa''-3(\kappa')^2}{120}s^5+\dotsb,\\
	\label{eq:y-coord-of-legendrian-curves}
	y(s)
	&\sim\frac{\kappa}{2}s^2+\frac{\kappa'}{6}s^3+\frac{-\kappa^3+\kappa''}{24}s^4+\dotsb
\end{align}
as $s\to 0$, where $\kappa(0)$, $\kappa'(0)$, ... are denoted simply by $\kappa$, $\kappa'$, .... Then we obtain
\begin{equation}
	\label{eq:u-coord-of-legendrian-curves}
	u(s)\sim\frac{\kappa}{12}s^3+\frac{\kappa'}{24}s^4+\dotsb
\end{equation}
by \eqref{eq:u-coordinate-change-along-Legendrian-path}.
Consequently,
\begin{equation}
	\label{eq:asymptotics-of-signed-chord-length}
	\ch(s)
	\sim
	s-\frac{\kappa^2}{72}s^3-\frac{\kappa\kappa'}{72}s^4
	+\frac{\kappa^4-72\kappa\kappa''-72(\kappa')^2}{17280}s^5+\dotsb
\end{equation}
and it follows that
\begin{equation}
	\label{eq:asymptotics-of-signed-chord-length-inverse}
	\ch^{-1}(t)\sim
	t+\frac{\kappa^2}{72}t^3+\frac{\kappa\kappa'}{72}t^4
	+\frac{\kappa^4+8\kappa\kappa''+8(\kappa')^2}{1920}t^5+\dotsb.
\end{equation}
Therefore,
\begin{equation}
	\Psi^\mathrm{loc}_p(t)
	=\ch^{-1}(t)-\ch^{-1}(-t)
	\sim
	2t+\frac{\kappa^2}{36}t^3+\frac{\kappa^4+8\kappa\kappa''+8(\kappa')^2}{960}t^5+\dotsb,
\end{equation}
from which we obtain
\begin{gather}
	\Res_{\zeta=-1}B_K(\zeta)=2L(\gamma),\qquad
	\Res_{\zeta=-3}B_K(\zeta)=\frac{1}{12}\int_K\kappa^2ds,\\
	\Res_{\zeta=-5}B_K(\zeta)=\int_K\frac{\kappa^4+8\kappa\kappa''+8(\kappa')^2}{192}\,ds
	=\frac{1}{192}\int_K\kappa^4ds
\end{gather}
by \eqref{eq:residue-formula}.
The residues at the other poles can be computed similarly.

\begin{dfn}
	We call the number $E(K)=B_K(-2)$ the \emph{energy} of the Legendrian knot $K$.
\end{dfn}

\section{The energy and its M\"obius invariance}
\label{sec:Mobius-invariance}

In this section, we prove the M\"obius invariance of our energy, i.e., the invariance under the action of $\PU(2,1)$,
stated below.

\begin{thm}
	\label{thm:PU-invariance-of-energy}
	Let $K$ be a Legendrian knot in $\mathcal{H}$.
	Then
	\begin{equation}
		E(K)=E(T(K))
	\end{equation}
	for any $T\in\PU(2,1)$ satisfying $T(K)\subset\mathcal{H}$.
\end{thm}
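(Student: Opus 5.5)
Since $\PU(2,1)$ is generated by dilations, Heisenberg translations, rotations and the inversion $\iota$, it suffices to treat these generators one at a time. There is one caveat: a composition $T$ with $T(K)\subset\mathcal{H}$ might pass through intermediate knots containing $\infty$. To handle this, I would first rewrite $T$ as a product of generators in which every intermediate image of $K$ avoids $\infty$. Concretely, if $T(\infty)=\infty$, then $T$ is an affine Heisenberg similarity (translation $\circ$ rotation $\circ$ dilation). Otherwise $T=A\circ\iota\circ A'$ with $A,A'$ affine, where $A'$ is chosen so that $A'(K)$ misses the origin. Hence it suffices to prove invariance under affine similarities and under $\iota$ applied to knots avoiding the origin.

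\textbf{Translations and rotations.} These preserve $\dK$ and arc length, so $B_K(\zeta)$ is unchanged for $\Re\zeta>-1$, and therefore also after meromorphic continuation.

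\textbf{Dilations.} Under $\delta_\lambda$ we have $ds\mapsto\lambda\,ds$ and $\dK\mapsto\lambda\dK$. Hence $B_{\delta_\lambda K}(\zeta)=\lambda^{\zeta+2}B_K(\zeta)$, and at $\zeta=-2$ the factor is $1$. Note that $-2$ is not a pole, because the poles lie only at odd negative integers.

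\textbf{Inversion (the main step).} Here the integrand is not simply rescaled, so I would use an alternative, cutoff-type expression for $E(K)$. From the residue computation, $\Psi(t)=2Lt+O(t^3)$ near $0$. Splitting $B_K(-2)$ at a small $\varepsilon$ and continuing analytically gives
\begin{equation}
E(K)=\lim_{\varepsilon\to0}\Bigl(\iint_{\dK(p,q)\ge\varepsilon}\frac{dp\,dq}{\dK(p,q)^2}-\frac{2L(K)}{\varepsilon}\Bigr),
\end{equation}
since $\int_0^\varepsilon t^{\zeta}\cdot 2L\,dt$ continues to $-2L/\varepsilon$ at $\zeta=-2$, and the remainder is $O(\varepsilon)$. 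The same formula holds with the region $\dK\ge\varepsilon$ replaced by any region $\dK(p,q)\ge\varepsilon\rho(p,q)$ for a smooth positive $\rho$, provided the counterterm is replaced by $\int_K 2/(\varepsilon\rho(p,p))\,dp$. To justify this, I would use the local parametrization by $\ch$ from Lemma~\ref{lem:signed-Koranyi-chord-length-as-a-parametrization}: the excised set near each $p$ is $\{|\ch|<\varepsilon\rho\}$, and the error is $O(\varepsilon)$ because $\Psi^{\mathrm{loc}}_p$ is odd with no $t^2$ term.

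Now apply $\iota$. The 2-form is invariant by \eqref{eq:inversion-formula-for-Koranyi-distance} and \eqref{eq:inversion-formula-for-horizontal-vector-norm}. The region $\dK(\iota p,\iota q)\ge\varepsilon$ pulls back to $\dK(p,q)\ge\varepsilon\normH{p}\normH{q}$, so $\rho(p,p)=\normH{p}^2$. The counterterm then becomes $\int_K 2\,dp/(\varepsilon\normH{p}^2)$, which equals $2L(\iota K)/\varepsilon$ by \eqref{eq:inversion-formula-for-horizontal-vector-norm}. Both regularized expressions therefore coincide, and $E(\iota K)=E(K)$.

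The main obstacle is the uniform $O(\varepsilon)$ control in the variable-cutoff formula. This needs the absence of even-order terms, together with continuity of the expansion of $\ch^{-1}$ in $p$ (uniformity in $p$ is already provided by Lemma~\ref{lem:signed-Koranyi-chord-length-as-a-parametrization}), and a first-order Taylor expansion of $\rho(p,q)$ about the diagonal.
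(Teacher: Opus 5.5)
Your proof is correct and is essentially the paper's argument in global form. The paper proves $V(\iota(K);\iota(p))=\normH{p}^2V(K;p)$ for the pointwise renormalized potential and then integrates using $E(K)=\int_KV(K;p)\,dp$, whereas you derive the cutoff formula for $E(K)$ directly from $B_K$; but the paper's cutoff parameters $\sigma_\pm$ are exactly the endpoints near $p$ of your pulled-back region $\dK(p,q)\ge\varepsilon\normH{p}\normH{q}$. As in the paper, your variable-cutoff error is $O(\varepsilon)$ because these endpoints are $\pm\varepsilon\normH{p}^2+O(\varepsilon^2)$ with a common second-order coefficient: the first-order variation of $\rho$ along $K$ shifts both endpoints the same way, so the contribution lost on one side is regained on the other. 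That, rather than the oddness of $\Psi^{\mathrm{loc}}_p$ alone, is the precise reason. Your explicit reduction to knots avoiding the origin before inverting is care the paper supplies only later, via open knots.
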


Observe that it is immediate from the definition that $E(K)=B_K(-2)$ is invariant under
the dilations, the left translations, and the rotations.
Thus it remains to verify the invariance under the inversion.

Let us briefly recall how the corresponding result was established
for the Euclidean knot energy by Freedman--He--Wang \cite{Freedman-He-Wang-94}.
In this setting, the energy was originally defined by the second author \cite{OHara-91} by the formula
\begin{equation}
	\label{eq:OHara-energy-for-Euclidean-knots}
	E(K)
	=\lim_{\varepsilon\to +0}
	\left(\iint_{(K\times K)\setminus\Delta_\varepsilon}\frac{dp\,dq}{\abs{p-q}^2}
	-\frac{2L}{\varepsilon}\right)
\end{equation}
using the method of Hadamard regularization,
where $\Delta_\varepsilon=\set{(p,q)\in K\times K|\abs{p-q}<\varepsilon}$
and $L$ is the length of $K$.
Afterwards, Nakauchi \cite{Nakauchi-93} and \cite{Freedman-He-Wang-94} slightly reformulated it,
by using the arc-length distance $d_K$, as
\begin{equation}
	\label{eq:NFHW-formulation-of-energy-for-Euclidean-knots}
	E(K)
	=-4+\iint_{K\times K}\left(\frac{1}{\abs{p-q}^2}-\frac{1}{d_K(p,q)^2}\right)dp\,dq,
\end{equation}
and this expression was used for showing the M\"obius invariance of $E(K)$.
Finally, the equivalence to the definition based on the beta function was observed
by Brylinski \cite{Brylinski-99}.

Along the same lines, we can reformulate our energy of Legendrian knots in $\mathcal{H}$.
Let us introduce the pointwise \emph{potential} at $p\in K$ by
\begin{equation}
	\label{eq:OHara-type-potential-function}
	V(K;p)
	=\lim_{\varepsilon\to+0}
	\left(\int_{K\setminus\BK(p;\varepsilon)}\frac{dq}{\dK(p,q)^2}-\frac{2}{\varepsilon}\right),
\end{equation}
which converges by the asymptotic behavior \eqref{eq:asymptotics-of-signed-chord-length} of
the signed chord length function.
Then we can show that
\begin{equation}
	\label{eq:OHara-type-formulation-of-Legendrian-knot-energy}
	\begin{split}
	E(K)
	=\int_KV(K;p)\,dp
	&=\lim_{\varepsilon\to +0}
	\left(\iint_{\dK(p,q)\geqq\varepsilon}\frac{dp\,dq}{\dK(p,q)^2}
	-\frac{2L}{\varepsilon}\right) \\
	&=
	-4+\iint_{K\times K}\left(\frac{1}{\dK(p,q)^2}-\frac{1}{d_K(p,q)^2}\right)dp\,dq
	\end{split}
\end{equation}
following the argument in \cite{Brylinski-99}, where $d_K$ is the arc-length distance.
Since $\dK(p,q)\leqq d_K(p,q)$ by Corollary \ref{cor:length-in-terms-of-Koranyi-distance},
it follows that $E(K)$ is bounded from below by $-4$;
however, the actual minimum energy is $0$, as we discuss in the next section.

\begin{rem}
	Our energy $E$ is an \emph{energy of knots} in the sense of \cite{OHara-03}.
	In fact, we have just observed the boundedness from below,
	and the continuity with respect to the $C^2$-topology can be proved in the same way as in
	the Euclidean case.
	To show the self-repulsiveness, i.e., that $E(K)$ diverges to $+\infty$ as $K$ approaches a self-intersection,
	suppose that there exist $p_0$, $q_0\in K$, $p_0\not=q_0$ such that
	$\dK(p_0,q_0)=\sigma$ and $d_K(p_0,q_0)=\delta$, where $\sigma\ll\delta$.
	Let $\gamma\colon\mathbb{R}\to\mathcal{H}$, with period $L=L(K)$, be the arc-length parametrization of $K$
	for which $\gamma(0)=p_0$ and $\gamma(\delta)=q_0$.
	Then, for any $p=\gamma(s)\in K$ with $\abs{s}\leqq\delta/2$, we have
	\begin{equation}
		\begin{split}
			\int_K\left(\frac{1}{\dK(p,q)^2}-\frac{1}{d_K(p,q)^2}\right)dq
			&\geqq\int_{-\delta/4}^{\delta/4}
			\left(\frac{1}{\dK(p,\gamma(\delta+s'))^2}-\frac{1}{(\delta/4)^2}\right)ds' \\
			&\geqq\int_{-\delta/4}^{\delta/4}\frac{ds'}{(\sigma+\abs{s}+\abs{s'})^2}-\frac{8}{\delta}
			\geqq\frac{2}{\sigma+\abs{s}}-\frac{16}{\delta}.
		\end{split}
	\end{equation}
	Consequently,
	it follows from the second line of \eqref{eq:OHara-type-formulation-of-Legendrian-knot-energy} that
	\begin{equation}
		E(K)
		\geqq -4+\int_{-\delta/2}^{\delta/2}\left(\frac{2}{\sigma+\abs{s}}-\frac{16}{\delta}\right)ds
		\geqq 4\log\delta-4\log\sigma+(\text{constant}).
	\end{equation}
	This proves the desired result.
\end{rem}

In \eqref{eq:OHara-type-potential-function}, one is also allowed to move toward $p$
from the both sides at different speeds.
For simplicity, let us take an arc-length parametrization $\gamma\colon I\to\mathcal{H}$ of $K$
so that $\gamma(0)=p$ and $0$ is in the interior of $I$. Then
\begin{equation}
	\label{eq:OHara-type-potential-function-another-form}
	V(K;p)
	=\lim_{\varepsilon_1,\,\varepsilon_2\to+0}
	\left(\int_{I\setminus(-\delta_1,\delta_2)}\frac{ds}{\dK(p,\gamma(s))^2}
	-\frac{1}{\varepsilon_1}-\frac{1}{\varepsilon_2}\right),
\end{equation}
where
\begin{equation}
	\ch(-\delta_1)=-\varepsilon_1,\qquad
	\ch(\delta_2)=\varepsilon_2.
\end{equation}
Furthermore, as follows from \eqref{eq:asymptotics-of-signed-chord-length},
the limit in \eqref{eq:OHara-type-potential-function-another-form} remains unchanged if we replace it with
\begin{equation}
	\lim_{\varepsilon_1,\,\varepsilon_2\to+0}
	\left(\int_{I\setminus(-\delta_1,\delta_2)}\frac{ds}{\dK(p,\gamma(s))^2}
	-\frac{1}{\delta_1}-\frac{1}{\delta_2}\right).
\end{equation}

\begin{prop}
	\label{prop:inversion-formula-for-pointwise-potential}
	Suppose that $K$ is a Legendrian knot
	not passing through the origin in $\mathcal{H}$, and let $p\in K$.
	Then, if we set $\tilde{K}=\iota(K)$ and $\tilde{p}=\iota(p)$
	for the inversion $\iota$ given by \eqref{eq:inversion},
	\begin{equation}
		V(\tilde{K};\tilde{p})=\norm{p}_{\mathcal{H}}^2V(K;p).
	\end{equation}
\end{prop}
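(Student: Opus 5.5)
We follow the Euclidean proof of Freedman--He--Wang, using the two-sided form \eqref{eq:OHara-type-potential-function-another-form} of the potential. Take an arc-length parametrization $\gamma$ of $K$ with $\gamma(0)=p$. Let $\tilde\gamma=\iota\compose\gamma$, and write $\tilde s$ for the arc-length parameter of $\tilde K$, normalized so that $\tilde s=0$ at $\tilde p$. By \eqref{eq:inversion-formula-for-horizontal-vector-norm},
\begin{equation}
	d\tilde s=\frac{ds}{\normH{\gamma(s)}^2},
\end{equation}
and by \eqref{eq:inversion-formula-for-Koranyi-distance},
\begin{equation}
	\frac{d\tilde s}{\dK(\tilde p,\tilde\gamma(s))^2}
	=\frac{\normH{p}^2\,ds}{\dK(p,\gamma(s))^2}.
\end{equation}
Hence the integrand of $V(\tilde K;\tilde p)$ is exactly $\normH{p}^2$ times that of $V(K;p)$. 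The whole content of the statement therefore lies in comparing the two counterterms.

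Cut out the arc $(-\delta_1,\delta_2)$ around $s=0$. In the $\tilde K$ picture this excises the corresponding arc $(-\tilde\delta_1,\tilde\delta_2)$, where
\begin{equation}
	\tilde\delta_2=\int_0^{\delta_2}\frac{ds}{\normH{\gamma(s)}^2},
\end{equation}
and similarly for $\tilde\delta_1$. We use the variant of \eqref{eq:OHara-type-potential-function-another-form} whose counterterms are $1/\delta_1+1/\delta_2$; the paper notes it has the same limit. Both sides then involve the same truncated integral up to the factor $\normH{p}^2$, and it remains to show that
\begin{equation}
	\frac{1}{\tilde\delta_1}+\frac{1}{\tilde\delta_2}-\normH{p}^2\left(\frac{1}{\delta_1}+\frac{1}{\delta_2}\right)\to 0
\end{equation}
as $\delta_1,\delta_2\to0$.

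To prove this, write $f(s)=\normH{\gamma(s)}^{-2}$, which is smooth near $0$ since $K$ avoids the origin. Then
\begin{equation}
	\tilde\delta_2=f(0)\delta_2+\tfrac12f'(0)\delta_2^2+O(\delta_2^3),
\end{equation}
so
\begin{equation}
	\frac{1}{\tilde\delta_2}=\frac{\normH{p}^2}{\delta_2}-\frac{f'(0)\normH{p}^4}{2}+O(\delta_2),
\end{equation}
using $f(0)=\normH{p}^{-2}$. Likewise,
\begin{equation}
	\frac{1}{\tilde\delta_1}=\frac{\normH{p}^2}{\delta_1}+\frac{f'(0)\normH{p}^4}{2}+O(\delta_1).
\end{equation}
The first-order constants have opposite signs on the two sides and cancel. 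This cancellation is exactly why we use the two-sided form, and it also covers the case $\delta_1\neq\delta_2$. Multiplying $V(K;p)$ by $\normH{p}^2$ and taking limits then gives the proposition.

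The main technical point I expect is justifying that the limit in \eqref{eq:OHara-type-potential-function-another-form} may be taken with independent $\delta_1,\delta_2$ and with counterterms in arc length rather than in Kor\'anyi chord length. The paper already asserts both facts via \eqref{eq:asymptotics-of-signed-chord-length}: the difference $1/\ch(\delta)-1/\delta$ is $O(\delta)$ because $\ch(s)=s+O(s^3)$. The same holds on $\tilde K$, since $\tilde K$ is again a smooth Legendrian knot in $\mathcal{H}$. One should also check that $\iota$ preserves the Legendrian condition; this holds because $\iota\in\PU(2,1)$ is a CR map.
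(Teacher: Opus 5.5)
Your proof is correct and follows essentially the same route as the paper. The change of variables via \eqref{eq:inversion-formula-for-Koranyi-distance} and \eqref{eq:inversion-formula-for-horizontal-vector-norm} makes the integrands agree up to the factor $\normH{p}^2$, and the counterterm comparison reduces to a second-order expansion of the arc-length reparametrization whose constant terms cancel between the two sides of $p$. The only cosmetic difference is that the paper starts from the symmetric Kor\'anyi cutoff $\tilde{\varepsilon}$ on $\tilde{K}$ and pulls it back to the asymmetric arc-length cutoffs $\sigma_\pm$ on $K$, whereas you use arc-length counterterms on both knots.
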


\begin{proof}
	Let $\gamma\colon I\to\mathcal{H}$ and $\tilde{\gamma}\colon\tilde{I}\to\mathcal{H}$ be
	the arc-length parametrizations of $K$ and $\tilde{K}$, respectively,
	with independent variables $s$ and $\tilde{s}$.
	Without loss of generality, we assume that $0$ lies in the interior of both $I$ and $\tilde{I}$,
	that $\gamma(0)=p$ and $\tilde{\gamma}(0)=\tilde{p}$,
	and that the one-to-one smooth mapping $s\mapsto\tilde{s}=\varphi(s)$,
	which describes the correspondence between $s$ and $\tilde{s}$
	near $s=0$ and $\tilde{s}=0$, is strictly increasing.
	By \eqref{eq:inversion-formula-for-horizontal-vector-norm}, we have
	\begin{equation}
		\label{eq:derivative-of-arc-length-reparametrization-for-inversion}
		\varphi'(s)=\frac{1}{\normH{\gamma(s)}^2}.
	\end{equation}

	Let $\chtilde$ be the signed chord length function for $\tilde{K}$ based at $\tilde{p}$,
	which is a one-to-one mapping near $0\in\tilde{I}$.
	For sufficiently small $\tilde{\varepsilon}>0$,
	let $\sigma_\pm=\varphi^{-1}(\chtilde^{-1}(\pm\tilde{\varepsilon}))$.
	It follows from \eqref{eq:derivative-of-arc-length-reparametrization-for-inversion} and
	\eqref{eq:inversion-formula-for-Koranyi-distance} that
	\begin{equation}
		\begin{split}
			V(\tilde{K};\tilde{p})
			&=\lim_{\tilde{\varepsilon}\to+0}
			\left(
				\int_{\dK(\tilde{p},\tilde{q})\geqq\tilde{\varepsilon}}
				\frac{d\tilde{q}}{\dK(\tilde{p},\tilde{q})^2}
				-\frac{2}{\tilde{\varepsilon}}
			\right)
			=\lim_{\tilde{\varepsilon}\to+0}
			\left(
				\int_{\dK(\tilde{p},\tilde{\gamma}(\tilde{s}))\geqq\tilde{\varepsilon}}
				\frac{d\tilde{s}}{\dK(\tilde{p},\tilde{\gamma}(\tilde{s}))^2}
				-\frac{2}{\tilde{\varepsilon}}
			\right)\\
			&=\lim_{\tilde{\varepsilon}\to+0}
			\left(
				\int_{I\setminus(\sigma_-,\sigma_+)}
				\frac{1}{\dK(\tilde{p},\tilde{\gamma}(\tilde{s}))^2}
				\cdot\frac{ds}{\norm{\gamma(s)}_\mathcal{H}^2}
				-\frac{2}{\tilde{\varepsilon}}
			\right)\\
			&=\norm{p}_\mathcal{H}^2\lim_{\tilde{\varepsilon}\to+0}
			\left(
				\int_{I\setminus(\sigma_-,\sigma_+)}
				\frac{ds}{\dK(p,\gamma(s))^2}-\frac{2}{\norm{p}_\mathcal{H}^2\tilde{\varepsilon}}
			\right).
		\end{split}
	\end{equation}
	On the other hand, by \eqref{eq:OHara-type-potential-function-another-form} and the remark after that,
	\begin{equation}
		V(K;p)
		=\lim_{\delta_1,\,\delta_2\to+0}
		\left(
			\int_{I\setminus(-\delta_1,\delta_2)}
			\frac{ds}{\dK(p,\gamma(s))^2}-\frac{1}{\delta_1}-\frac{1}{\delta_2}
		\right).
	\end{equation}

	We compare the two limits above after setting $-\delta_1=\sigma_-$ and $\delta_2=\sigma_+$.
	Then, since
	\begin{equation}
		\left.\frac{d\sigma_\pm}{d\tilde{\varepsilon}}\right|_{\tilde{\varepsilon}=0}
		=\left.\frac{d(\chtilde^{-1}(\pm\tilde{\varepsilon}))}{d\tilde{\varepsilon}}\right|_{\tilde{\varepsilon}=0}\cdot\varphi'(0)^{-1}
		=\pm\norm{p}_\mathcal{H}^2,
	\end{equation}
	we have the asymptotic expansions
	\begin{equation}
		\sigma_\pm\sim \pm\norm{p}_\mathcal{H}^2\tilde{\varepsilon}+A\tilde{\varepsilon}^2+\dotsb
		\qquad\text{as $\tilde{\varepsilon}\to 0$},
	\end{equation}
	where $A$ is some constant. Hence we obtain
	\begin{equation}
		V(\tilde{K};\tilde{p})-\norm{p}_\mathcal{H}^2V(K;p)
		=\lim_{\tilde{\varepsilon}\to+0}
		\left(\frac{1}{-\sigma_-}+\frac{1}{\sigma_+}-\frac{2}{\norm{p}_\mathcal{H}^2\tilde{\varepsilon}}\right)
		=0,
	\end{equation}
	which completes the proof.
\end{proof}

Theorem \ref{thm:PU-invariance-of-energy} follows immediately from
Proposition \ref{prop:inversion-formula-for-pointwise-potential}.

\begin{proof}[Proof of Theorem \ref{thm:PU-invariance-of-energy}]
	It suffices to consider the case where $T$ is the inversion.
	Let $\tilde{K}=\iota(K)$, and parametrize $K$ and $\tilde{K}$ in the same way as in the proof of
	Proposition \ref{prop:inversion-formula-for-pointwise-potential}.
	Then,
	\begin{equation}
		\begin{split}
			E(\tilde{K})
			&=\int_{\tilde{I}}V(\tilde{K};\tilde{\gamma}(\tilde{s}))\,d\tilde{s}\\
			&=\int_IV(\tilde{K};\tilde{\gamma}(\tilde{s}))\,\frac{ds}{\norm{\gamma(s)}_\mathcal{H}^2}
			=\int_I\norm{\gamma(s)}_\mathcal{H}^2V(K;\gamma(s))\,\frac{ds}{\norm{\gamma(s)}_\mathcal{H}^2}
			=E(K).\qedhere
		\end{split}
	\end{equation}
\end{proof}

\section{$\mathbb{R}$-circles as the least-energy Legendrian knots}
\label{sec:R-circles}

In this section, we determine those Legendrian knots for which the energy $E(K)$ takes its minimum value.
For this purpose,
it is important to note that the proof of Proposition \ref{prop:inversion-formula-for-pointwise-potential}
works in greater generality.

From now on, we consider smooth Legendrian knots in $\mathcal{H}\cup\set{\infty}\cong S^3$ in general.
Those passing through $\infty$ are referred to as \emph{open} or \emph{infinite Legendrian knots}.
(For comparison, Legendrian knots are called \emph{closed} or \emph{finite} if not open.)
For an open Legendrian knot $K$, we extend the definition of the energy by setting
\begin{equation}
	E(K)=\int_KV(K;p)\,dp,
\end{equation}
where we continue to define $V(K;\orddot)$ by formula \eqref{eq:OHara-type-potential-function}.
We must be warned that
the second expression in \eqref{eq:OHara-type-formulation-of-Legendrian-knot-energy} of the energy
does not hold in the same form.
We can check that
\begin{equation}
	\label{eq:alternative-energy-formula-for-open-Legendrian-knot}
	E(K)
	=\iint_{K\times K}\left(\frac{1}{\dK(p,q)^2}-\frac{1}{d_K(p,q)^2}\right)\,dp\,dq
\end{equation}
holds instead for open knots $K$.

Suppose that $K$ is an arbitrary Legendrian knot in $\mathcal{H}\cup\set{\infty}$,
not necessarily avoiding the origin, not necessarily finite.
In this case, $\tilde{K}=\iota(K)$ may also be an open Legendrian knot.
The proof of Proposition \ref{prop:inversion-formula-for-pointwise-potential} still works
in this setting, and we obtain $E(K)=E(\tilde{K})$.
Therefore, the M\"obius invariance of the energy can be reformulated in the following simple form,
for the invariance under dilations, left translations, and rotations is again immediate.

\begin{thm}
	For an arbitrary Legendrian knot $K$ in $\mathcal{H}\cup\set{\infty}$
	and any M\"obius transformation $T$,
	$E(K)=E(T(K))$ holds.
\end{thm}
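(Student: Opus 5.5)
The plan is to reduce to generators and then redo the proof of Proposition~\ref{prop:inversion-formula-for-pointwise-potential} for general position of $K$ relative to $0$ and $\infty$. Since $\PU(2,1)$ is generated by the dilations $\delta_\lambda$, the left translations, the rotations $R_\theta$ and the inversion $\iota$, it suffices to prove $E(K)=E(T(K))$ when $T$ is one of these.

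\emph{Affine generators.} For dilations, left translations and rotations, $T$ fixes $\infty$, so $K$ is open if and only if $T(K)$ is. The integrand $dp\,dq/\dK(p,q)^2$ is invariant under these maps, and so is $V(K;p)$: left translations and rotations preserve $\dK$, arc length and the Kor\'anyi balls. Under $\delta_\lambda$ both $\dK$ and arc length scale by $\lambda$, so the cutoff $\varepsilon$ becomes $\lambda\varepsilon$ and the counterterm $2/\varepsilon$ scales consistently. This gives $V(T(K);T(p))\,d(Tp)=V(K;p)\,dp$, and the definition $E(K)=\int_KV(K;p)\,dp$ shows $E$ is unchanged. This works for open and closed knots alike.

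\emph{Inversion.} The inversion is the substantive case. Let $K'=K\setminus\set{0,\infty}$. On $K'$ the formulas \eqref{eq:inversion-formula-for-Koranyi-distance} and \eqref{eq:inversion-formula-for-horizontal-vector-norm} hold pointwise, so $\iota$ is a diffeomorphism $K'\to\tilde K'$ with $d\tilde s=ds/\normH{\gamma(s)}^2$. I would then prove $V(\tilde K;\tilde p)=\normH{p}^2V(K;p)$ for every $p\in K'$ by the same argument as in Proposition~\ref{prop:inversion-formula-for-pointwise-potential}. That argument is local near $p$: it only uses the change of variables on $K\setminus\set{p}$ and the expansion $\sigma_\pm\sim\pm\normH{p}^2\tilde\varepsilon+O(\tilde\varepsilon^2)$.

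The one new point is that the integrals $\int_{K\setminus\BK(p;\varepsilon)}dq/\dK(p,q)^2$ must still converge and transform correctly near the special points. Near $q\to\infty$ on an open $K$, the arc-length parameter $s\to\pm\infty$ with $\dK(p,\gamma(s))\sim\abs{s}$ at the same rate, since $\iota(K)$ is smooth at $0$. Hence the tail is integrable. Near $q\to0$ the integrand is smooth, and the change of variables maps the neighbourhood of $0$ in $K$ onto a neighbourhood of $\infty$ in $\tilde K$. Thus the transformed integral is exactly the integral over $\tilde K\setminus\BK(\tilde p;\tilde\varepsilon)$, and the finite-set exceptions $\set{0,\infty}$ have measure zero.

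Integrating $V(\tilde K;\tilde p)\,d\tilde s=\normH{\gamma(s)}^2V(K;\gamma(s))\,ds/\normH{\gamma(s)}^2$ over $K'$, exactly as in the proof of Theorem~\ref{thm:PU-invariance-of-energy}, gives $E(\tilde K)=E(K)$, provided both integrals converge or both diverge.

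\emph{Main obstacle.} I expect the main obstacle to be justifying this last integration when $K$ or $\tilde K$ is open. One must check that $p\mapsto V(K;p)$ is integrable near $\infty$, so that $E$ of an open knot is well defined, and that no boundary contribution is lost at the excluded points. My first attempt would be to show $V(K;p)=O(\abs{s}^{-2})$ as $p=\gamma(s)\to\infty$ along an open knot. I would get this by writing $V(K;p)=\normH{\iota p}^2\,V(\iota K;\iota p)$, which is the identity just proved applied to $\iota K$. Here $\iota K$ is closed near $\iota p\to0$, so $V(\iota K;\cdot)$ is bounded there by the uniform local estimate from Lemma~\ref{lem:signed-Koranyi-chord-length-as-a-parametrization}, and $\normH{\iota p}^2=\normH{p}^{-2}\sim s^{-2}$. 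With this decay, dominated convergence makes the pointwise identity integrate to the equality of energies.
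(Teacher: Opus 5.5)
Your proposal is correct and follows the paper's route. You reduce to the generators of $\PU(2,1)$, note that dilations, left translations and rotations are immediate, extend the pointwise identity $V(\iota(K);\iota(p))=\normH{p}^2V(K;p)$ of Proposition~\ref{prop:inversion-formula-for-pointwise-potential} to knots meeting $0$ or $\infty$, and then integrate over $K\setminus\set{0,\infty}$. Your extra checks supply details the paper leaves implicit when it says ``the proof \ldots\ still works in this setting'': convergence of the tail of the potential integral, and the decay $V(K;\gamma(s))=O(s^{-2})$ near $\infty$ obtained by applying the identity to $\iota(K)$.
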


Next, note that any Legendrian knot $K$, closed or open,
can be mapped to a Legendrian knot $K'$ passing through the origin by a left translation,
without changing its energy.
Let $\tilde{K}'$ be the inversion of $K'$.
Then $\tilde{K}'$ is an open Legendrian knot, and by \eqref{eq:alternative-energy-formula-for-open-Legendrian-knot}
\begin{equation}
	\label{eq:energy-equality-for-opening-knots}
	\begin{split}
		E(K)=E(K')=E(\tilde{K}')
		&=\int_{\tilde{K}'}V(\tilde{K}';\tilde{p})\,d\tilde{p}\\
		&=\iint_{\tilde{K}'\times\tilde{K}'}\left(\frac{1}{\dK(p,q)^2}-\frac{1}{d_{\tilde{K}'}(p,q)^2}\right)\,dp\,dq.
	\end{split}
\end{equation}
This shows that the energy is always non-negative, and vanishes
if and only if $\dK(p,q)=d_{\tilde{K}'}(p,q)$ everywhere along $\tilde{K}'$.

Our claim is the following: the last condition implies that $\tilde{K}'$ is an infinite $\mathbb{R}$-circle.

Before proceeding, let us briefly recall the definition of $\mathbb{R}$-circles
by following Goldman \cite{Goldman-99}.
To define them in an abstract manner, it is convenient to equip the complex unit ball $B^2_\mathbb{C}$
with the complex hyperbolic metric.
Then, it is known that any complete totally geodesic surface in $B^2_\mathbb{C}$ is either a complex submanifold
or a totally real submanifold (and every such surface is isometric to the hyperbolic plane).
A knot in $\bdry B^2_\mathbb{C}$ is called an \emph{$\mathbb{R}$-circle} if
it is the intersection of $\bdry B^2_\mathbb{C}$ and the closure of
a totally geodesic totally real surface in $B^2_\mathbb{C}$.
Any $\mathbb{R}$-circle is automatically Legendrian.
The notion of $\mathbb{R}$-circles is M\"obius invariant
because the action of the group $\PU(2,1)$ on $\bdry B^2_\mathbb{C}$
continuously extends to an action by isometries on $B^2_\mathbb{C}$.
Moreover, $\PU(2,1)$ acts transitively on the set of all $\mathbb{R}$-circles.

Since $\bdry B^2_\mathbb{C}$ can be identified with $\mathcal{H}\cup\set{\infty}$,
$\mathbb{R}$-circles in $\mathcal{H}\cup\set{\infty}$ are also defined.
An $\mathbb{R}$-circle $K$ is called a \emph{finite $\mathbb{R}$-circle} if $K\subset\mathcal{H}$;
otherwise $K$ is an \emph{infinite $\mathbb{R}$-circle}.

Infinite $\mathbb{R}$-circles are easy to describe \cite{Goldman-99}*{Corollary 4.4.4}:
an infinite Legendrian knot $K$ is an infinite $\mathbb{R}$-circle if and only if
its vertical projection is an affine line in $\mathbb{C}$.
Therefore, more explicitly, infinite $\mathbb{R}$-circles are affine lines in $\mathcal{H}$ given by
\begin{equation}
	\Set{(x_0,y_0,u_0)+t\left(a,b,-\frac{1}{2}(y_0a-x_0b)\right)|t\in\mathbb{R}},\qquad
	\text{where $(a,b)\in\mathbb{R}^2\setminus\set{(0,0)}$}.
\end{equation}
In particular, infinite $\mathbb{R}$-circles passing through the origin in $\mathcal{H}$ are
\begin{equation}
	\label{eq:standard-infinite-R-circle}
	\Gamma_{(a,b)}=\Set{t(a,b,0)|t\in\mathbb{R}},\qquad
	\text{where $(a,b)\in\mathbb{R}^2\setminus\set{(0,0)}$}.
\end{equation}
Since $\PU(2,1)$ acts transitively on the set of all $\mathbb{R}$-circles,
any finite $\mathbb{R}$-circle is the image of an (in fact, any) infinite $\mathbb{R}$-circle
under a M\"obius transformation.
Thus finite $\mathbb{R}$-circles are also completely understood in this sense,
although they are generally more difficult to work with.

\begin{prop}
	\label{prop:no-hanging-around-means-infinite-R-circle}
	Let $K$ be an open Legendrian knot for which
	\begin{equation}
		\dK(p,q)=d_K(p,q)\qquad\text{for any $p$, $q\in K\setminus\set{\infty}$}.
	\end{equation}
	Then, $K$ is an infinite $\mathbb{R}$-circle.
\end{prop}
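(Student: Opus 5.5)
Our plan is to show that the vertical projection $\overline{\gamma}=\pi\compose\gamma$ of $K$ is an affine line in $\mathbb{C}$; by the characterization of infinite $\mathbb{R}$-circles recalled above (Goldman's Corollary 4.4.4), this suffices. Parametrize $K\setminus\set{\infty}$ by arc-length, $\gamma\colon\mathbb{R}\to\mathcal{H}$, so that $d_K(\gamma(s),\gamma(s'))=\abs{s-s'}$. The hypothesis then reads $\normH{\gamma(s)^{-1}\gamma(s')}=\abs{s-s'}$ for all $s,s'$.

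The first and main step is to compare both sides with the Euclidean length of the projection. Write $\gamma(s)^{-1}\gamma(s')=(w,v)$. Then $w=\overline{\gamma}(s')-\overline{\gamma}(s)$. We have the elementary inequality
\[
\normH{(w,v)}=\sqrt[4]{\abs{w}^4+16v^2}\geqq\abs{w}.
\]
On the other hand, since $\overline{\gamma}$ is a unit-speed planar curve, we have $\abs{w}\leqq\abs{s-s'}$, with equality if and only if $\overline{\gamma}$ restricted to $[s,s']$ is a straight segment. This alone does not close the argument, because the hypothesis gives $\sqrt[4]{\abs{w}^4+16v^2}=\abs{s-s'}$, and the contribution of $v$ could compensate for a shortfall in $\abs{w}$. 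This compensation is the obstacle to handle.

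To overcome it, we use the local expansion already computed. Taking $p=\gamma(s_0)$, the expansion \eqref{eq:asymptotics-of-signed-chord-length} gives
\[
\dK(p,\gamma(s_0+s))=\ch(s)=s-\frac{\kappa(s_0)^2}{72}s^3+O(s^4)
\]
for small $s>0$. The hypothesis forces $\ch(s)=s$ exactly, so $\kappa(s_0)^2=0$. Since $s_0$ is arbitrary, the signed curvature $\kappa$ of $\overline{\gamma}$ vanishes identically. Hence $\overline{\gamma}$ is a unit-speed straight line, defined on all of $\mathbb{R}$, so its image is a full affine line in $\mathbb{C}$ (it is injective and proper because $K$ is an embedded open knot through $\infty$).

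It remains to conclude. The Legendrian lift of a line is determined by the line and one point, via \eqref{eq:u-coordinate-change-along-Legendrian-path}. Therefore $K\setminus\set{\infty}$ is exactly one of the affine lines listed above, i.e.\ $K$ is an infinite $\mathbb{R}$-circle. As a consistency check, for $\Gamma_{(a,b)}$ one verifies directly that $\gamma(s)^{-1}\gamma(s')$ has vanishing $u$-component, so that $\dK=\abs{s-s'}$ indeed holds. The only point needing care is that the expansion \eqref{eq:asymptotics-of-signed-chord-length} is valid uniformly at every point of $K\setminus\set{\infty}$. This holds because the expansion is local and uses only the smoothness of $\gamma$.
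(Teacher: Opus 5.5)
Your proof is correct and takes essentially the same route as the paper: reduce, via Goldman's characterization, to showing that the curvature $\kappa$ of the vertical projection vanishes, then read this off from the $s^3$ coefficient $-\kappa^2/72$ in the expansion of the signed Kor\'anyi chord length, which must vanish because $\dK=d_K$ forces $\ch(s)=s$. The inequality approach you set aside and your extra remarks on the lift being a full affine line are harmless additions that the paper leaves implicit.
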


\begin{proof}
	It suffices to show that the vertical projection of $K$ has
	vanishing curvature everywhere.
	Let $p\in K\setminus\set{\infty}$ be arbitrary,
	and we use an arc-length parametrization of $K$ in which $p$ corresponds to $s=0$.
	We apply left translations and rotations to $K$ so that $p$ becomes the origin
	and the velocity vector there equals $(1,0,0)$ (these transformations
	affect $\pi(K)$ only by Euclidean isometries, so the curvature remains unchanged).
	Then, if $\kappa$ is the signed curvature of $\pi(K)$,
	$K$ is parametrized as in \eqref{eq:x-coord-of-legendrian-curves}, \eqref{eq:y-coord-of-legendrian-curves},
	and \eqref{eq:u-coord-of-legendrian-curves}.
	This implies that
	\begin{equation}
		\dK(0,\gamma(s))
		=\sqrt[4]{(x(s)^2+y(s)^2)^2+16u(s)^2}
		\sim s-\frac{\kappa^2}{72}s^3+\dotsb.
	\end{equation}
	However, the assumption implies that the left-hand side equals $s$,
	which means that $\kappa$ must be zero at $p$.
\end{proof}

\begin{thm}
	\label{thm:minimum-energy-knot}
	The energy $E(K)$ of Legendrian knots attains its minimum value $0$
	exactly at $\mathbb{R}$-circles.
\end{thm}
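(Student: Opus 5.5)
The argument rests on the identity \eqref{eq:energy-equality-for-opening-knots}, M\"obius invariance, and Proposition \ref{prop:no-hanging-around-means-infinite-R-circle}. It has three steps: non-negativity, vanishing on $\mathbb{R}$-circles, and the converse.

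\emph{Non-negativity.} Let $K$ be any Legendrian knot in $\mathcal{H}\cup\set{\infty}$. Move a point of $K$ to the origin by a left translation to get $K'$, and invert to get the open knot $\tilde{K}'$. By \eqref{eq:energy-equality-for-opening-knots},
\begin{equation}
	E(K)=\iint_{\tilde{K}'\times\tilde{K}'}\left(\frac{1}{\dK(p,q)^2}-\frac{1}{d_{\tilde{K}'}(p,q)^2}\right)dp\,dq .
\end{equation}
Corollary \ref{cor:length-in-terms-of-Koranyi-distance} gives $\dK\leqq d_{\tilde{K}'}$ pointwise, so the integrand is non-negative and $E(K)\geqq 0$.

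\emph{$\mathbb{R}$-circles have energy $0$.} Since $\PU(2,1)$ acts transitively on $\mathbb{R}$-circles and $E$ is M\"obius invariant (the general theorem for knots in $\mathcal{H}\cup\set{\infty}$), it suffices to treat $\Gamma_{(1,0)}=\set{(t,0,0)}$. Here $\dK((t,0,0),(t',0,0))=\normH{(t'-t,0,0)}=\abs{t-t'}$, which is exactly the arc-length distance. So the integrand in \eqref{eq:alternative-energy-formula-for-open-Legendrian-knot} vanishes identically and $E=0$. This shows the minimum value is $0$ and that it is attained.

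\emph{Converse.} Suppose $E(K)=0$. The integrand above is continuous and non-negative on $(\tilde{K}'\times\tilde{K}')\setminus\Delta$ (with $\infty$ removed), so it vanishes everywhere. Hence $\dK(p,q)=d_{\tilde{K}'}(p,q)$ for all finite $p,q\in\tilde{K}'$. Proposition \ref{prop:no-hanging-around-means-infinite-R-circle} then says $\tilde{K}'$ is an infinite $\mathbb{R}$-circle. Therefore $K=\tau^{-1}(\iota^{-1}(\tilde{K}'))$, with $\tau$ the left translation, is the image of an $\mathbb{R}$-circle under a M\"obius transformation, hence an $\mathbb{R}$-circle.

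The one step needing care is passing from ``integral zero'' to ``pointwise equality'', which requires continuity of the integrand. Off the diagonal this is clear. Near the diagonal the difference is bounded by the expansion \eqref{eq:asymptotics-of-signed-chord-length}, so it extends continuously. If $K$ already passes through $\infty$, no translation to an infinite point is needed: translate any finite point to the origin and apply $\iota$, exactly as above.
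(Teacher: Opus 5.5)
Your proposal is correct and follows the paper's argument. Non-negativity and the vanishing criterion come from \eqref{eq:energy-equality-for-opening-knots} together with $\dK\leqq d_{\tilde{K}'}$, and the converse is Proposition~\ref{prop:no-hanging-around-means-infinite-R-circle} plus the M\"obius invariance of $\mathbb{R}$-circles. You also make explicit two points the paper leaves implicit: that $\mathbb{R}$-circles actually have energy $0$ (checked on $\Gamma_{(1,0)}$), and the continuity argument that passes from a vanishing integral to pointwise equality. For the latter, continuity off the diagonal already suffices.
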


\begin{proof}
	If $E(K)=0$, then \eqref{eq:energy-equality-for-opening-knots}
	and Proposition \ref{prop:no-hanging-around-means-infinite-R-circle}
	implies that $K$ is the image of an infinite $\mathbb{R}$-circle
	under a M\"obius transformation, which is again an $\mathbb{R}$-circle.
\end{proof}

\section{The cosine formula of the energy}
\label{sec:cosine-formula}

For ordinary knots in $\mathbb{R}^3$, Doyle and Schramm proved the formula
\begin{equation}
	\label{eq:classical-cosine-formula}
	E(K)=\iint_{K\times K}\frac{1-\cos\theta_K(p,q)}{\abs{p-q}^2}\,dp\,dq,
\end{equation}
where $\theta_K(p,q)$ is a certain M\"obius invariant angle ``between $p$ and $q$ relative to $K$.''
This is called the \emph{cosine formula}. Our plan here is to show its Heisenberg analog.

Our version of $\theta_K(p,q)$ for Legendrian knots $K$ is defined as follows
in terms of $\mathbb{R}$-circles introduced in the last section.
Recall that the classification of infinite $\mathbb{R}$-circles we discussed earlier immediately implies
that, for any distinct points $p$, $q\in\mathcal{H}\cup\set{\infty}$
and a horizontal direction $l$ at $p$,
there exists a unique $\mathbb{R}$-circle passing through $p$ and $q$ that is tangent to $l$ at $p$
\cite{Goldman-99}*{Theorem 4.4.12}.

\begin{dfn}
	Let $K$ be a Legendrian knot, possibly infinite.
	For any two distinct points $p$, $q\in K$,
	let $\Gamma_K(p,q)$ denote the $\mathbb{R}$-circle passing through $p$ and $q$ that is tangent to $K$ at $p$.
	Then, the angle $\theta_K(p,q)$ is defined to be the angle
	between the two $\mathbb{R}$-circles $\Gamma_K(p,q)$ and $\Gamma_K(q,p)$,
	measured either at $p$ or at $q$.
	More precisely, we equip $K$ with an arbitrary orientation, which induces
	orientations on $\Gamma_K(p,q)$ and $\Gamma_K(q,p)$,
	and the angle is measured with respect to these orientations.
	Thus $\theta_K(p,q)$ is a well-defined angle taking values in $[0,\pi]$.
\end{dfn}

The angle between two $\mathbb{R}$-circles (or Legendrian curves in general)
is a M\"obius invariant notion since, as is well known, M\"obius transformations preserve the conformal class of
the metric defined in the contact distribution.
The equality of the angles measured at $p$ and at $q$ can be seen as follows.
By a M\"obius transformation, we may assume that $p=(0,0,0)$ and $q=\infty$.
Then $\Gamma_\gamma(p,q)=\Gamma_{(a,b)}$ and $\Gamma_\gamma(q,p)=\Gamma_{(a',b')}$
for some $(a,b)$ and $(a',b')$,
where $\Gamma_{(a,b)}$ denotes the infinite $\mathbb{R}$-circle given by \eqref{eq:standard-infinite-R-circle}.
The angle between these two $\mathbb{R}$-circles at $p=(0,0,0)$
is the angle given by two directions $(a,b)$ and $(a',b')$.
To measure the angle at $q=\infty$, we apply the inversion, which sends $q$ to the origin.
Since both $\Gamma_{(a,b)}$ and $\Gamma_{(a',b')}$ are invariant under the inversion,
the angle at $q$ is again the angle between the directions $(a,b)$ and $(a',b')$.

We claim the following.

\begin{thm}
	\label{thm:cosine-formula}
	For any finite Legendrian knot $K$,
	\begin{equation}
		E(K)
		=\iint_{K\times K}\frac{1-\cos\theta_K(p,q)}{\dK(p,q)^2}\,dp\,dq
	\end{equation}
	holds.
\end{thm}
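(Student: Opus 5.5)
Following Doyle--Schramm, I would prove the formula by comparing, for each fixed $p$, the regularized potential $V(K;p)$ with the potential of an $\mathbb{R}$-circle, after moving $p$ to $\infty$.

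\textbf{Step 1: invariance of the integrand.} The integrand $(1-\cos\theta_K(p,q))\,dp\,dq/\dK(p,q)^2$ is $\PU(2,1)$-invariant, since $dp\,dq/\dK^2$ is invariant and $\theta_K$ is a M\"obius invariant angle. The integral converges near the diagonal: $\theta_K(p,q)=O(d_K(p,q))$, which follows from the expansions \eqref{eq:x-coord-of-legendrian-curves}--\eqref{eq:u-coord-of-legendrian-curves}, so $1-\cos\theta_K=O(d_K^2)$.

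\textbf{Step 2: pointwise identity.} It suffices to show, for each fixed $p\in K$,
\begin{equation}
V(K;p)=\int_K\frac{1-\cos\theta_K(p,q)}{\dK(p,q)^2}\,dq,
\end{equation}
and then integrate over $p$. Both sides transform by the factor $\normH{p}^2$ under inversion: the left side by Proposition \ref{prop:inversion-formula-for-pointwise-potential}, the right side by \eqref{eq:inversion-formula-for-Koranyi-distance} and \eqref{eq:inversion-formula-for-horizontal-vector-norm}. I expect these pointwise formulas to extend, with suitable normalization, to $p\mapsto\infty$ as well, with $\normH{p}^2$ replaced by the appropriate conformal factor. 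Thus I would normalize so that $p=\infty$ and $K$ is an open knot through $\infty$. The natural quantity there is the density of $E$ with respect to the point at infinity; this is the limit of $\normH{p}^2V(K;p)$ under the inversion.

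\textbf{Step 3: the computation at $p=\infty$.} I would first apply the inversion taking $p$ to the origin, so that $\tilde K$ is open and $V(K;p)=\normH{p}^{-2}\cdot(\ldots)$ reduces to the open-knot expressions \eqref{eq:alternative-energy-formula-for-open-Legendrian-knot} and \eqref{eq:energy-equality-for-opening-knots}. In this frame $\Gamma_K(\infty,q)$ is the infinite $\mathbb{R}$-circle through $q$ parallel to the asymptotic direction of $\tilde K$. The circle $\Gamma_K(q,\infty)$ is the horizontal affine line through $q$ with direction $\dot{\tilde\gamma}(q)$. Hence $\cos\theta_K$ equals $\cos$ of the angle between the projected tangent $\pi\circ\tilde\gamma'$ and a fixed direction $e$.

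The pointwise identity should then reduce to a one-dimensional identity on the projected planar curve $\pi\circ\tilde\gamma$. This curve is asymptotic to the line in direction $e$ at both ends, and $\langle\tilde\gamma'(s),e\rangle$ is exactly the rate at which the projection advances along $e$. Integrating $1-\cos\theta$ with respect to $ds$, with the correct weight, should give $\lim\bigl(\text{arc length}-\text{projected length along }e\bigr)$. By Step 2 this is the finite part of $V$ at infinity.

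\textbf{Main obstacle.} The main difficulty is pinning down the regularized potential at $\infty$ (the analogue of $V$ for a point at infinity) and the precise weight. This must be done by expanding $V(K;p)$ through the inversion rather than guessed. Concretely, I would write $\tilde K$ near $\infty$ via the inversion of the expansions near $p$, compute $V(K;p)\normH{p}^{-2}$ as a limit of $\int(\dK^{-2}-\ldots)$, and match term by term with $\int(1-\cos\theta)\,\dK^{-2}$. This should also justify the interchange of limits. If the direct matching fails, the fallback is the classical Doyle--Schramm trick: differentiate both sides along an arc and compare the singular parts. That comparison uses the expansion \eqref{eq:asymptotics-of-signed-chord-length}, in which the $\kappa^2$ terms should cancel against $1-\cos\theta\sim\kappa^2 s^2/c$ for a suitable constant $c$.
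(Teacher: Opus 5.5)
Your architecture matches the paper's. You reduce to the pointwise identity for $V(K;p)$, put $p$ at the origin with tangent $(1,0,0)$, and invert so that $\tilde p=\infty$. You then read $\theta_K(p,q)$ as the angle between $(\pi\compose\tilde\gamma)'$ and the fixed direction $e=(1,0)$, so that $\int\cos\theta\,d\tilde q$ is a displacement along $e$. However, the proposal stops exactly where the proof's content lies. What you call the main obstacle (what the regularized potential becomes after inversion, with which weight, and why the counterterms match) is the whole argument, and you leave it at ``should give''. Two concrete facts are missing.

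First, you need no potential at $\infty$ and no limit of $\normH{p}^2V(K;p)$; the covariance remark in Step 2 is not needed. With $p=(0,0,0)$ one has $\dK(p,q)=\normH{q}$ and $\normH{\iota(q)}=1/\normH{q}$. Hence \eqref{eq:inversion-formula-for-horizontal-vector-norm} gives $dq/\dK(p,q)^2=d\tilde q$ exactly, and
\begin{equation}
	V(K;p)=\lim_{\varepsilon\to+0}\Bigl(\Length(\text{arc of $\tilde K$ from $\tilde p_+$ to $\tilde p_-$})-\frac{2}{\varepsilon}\Bigr),\qquad
	\tilde p_\pm=\iota\bigl(\gamma(\ch^{-1}(\pm\varepsilon))\bigr).
\end{equation}
So the weight is $1$, and the cutoff on $\tilde K$ is at Kor\'anyi norm $1/\varepsilon$, not at a prescribed value of the coordinate along $e$.

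Second, you must show $\int_{\tilde p_+}^{\tilde p_-}\cos\theta\,d\tilde q=2/\varepsilon+o(1)$. Without this, a constant discrepancy between the Kor\'anyi cutoff and the projected length is not excluded. It is not automatic either, since $\tilde u$ grows like $1/s$ along $\tilde K$ when $\kappa\neq0$. This is the computation the paper carries out. From \eqref{eq:x-coord-of-legendrian-curves}--\eqref{eq:u-coord-of-legendrian-curves},
\begin{equation}
	\pi(\tilde\gamma(s))=\frac{z(s)}{\abs{z(s)}^2+4iu(s)}=\frac{1}{s}+\frac{i\kappa}{6}+O(s).
\end{equation}
Combined with $\ch^{-1}(\pm\varepsilon)=\pm\varepsilon+O(\varepsilon^3)$ from \eqref{eq:asymptotics-of-signed-chord-length-inverse}, this gives $\pi(\tilde p_\pm)=(\pm1/\varepsilon+O(\varepsilon),\kappa/6+O(\varepsilon))$, and hence the cancellation.

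Your fallback rests on a wrong asymptotic. The same expansion gives $(\pi\compose\tilde\gamma)'(s)=-1/s^2+O(1)$, so $\theta_K(p,\gamma(s))=O(s^2)$ and $1-\cos\theta_K=O(s^4)$, not $\sim\kappa^2s^2/c$. Meanwhile $1/\dK^2-1/d_K^2\to\kappa^2/36$. The two integrands genuinely differ near the diagonal, so there are no $\kappa^2$ terms to cancel pointwise.
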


\begin{proof}
	It suffices to show that
	\begin{equation}
		\label{eq:pointwise-cosine-formula}
		V(K;p)
		=\int_K\frac{1-\cos\theta_K(p,q)}{\dK(p,q)^2}\,dq
	\end{equation}
	for each fixed $p\in K$.
	To prove \eqref{eq:pointwise-cosine-formula},
	a similar argument to the Euclidean case works if we pass to the vertical projection,
	as we elaborate below.
	Notice that the vertical projection $\pi\colon\mathcal{H}\to\mathbb{C}$
	preserves not only the lengths of Legendrian curves
	but also the angle between horizontal tangent vectors.

	By applying a left translation and a rotation to $K$,
	we may assume that $p=(0,0,0)$ and that $K$ is tangent to the vector $(1,0,0)$ at $p$.
	Let $\tilde{K}$ be the inversion of $K$, which is an open Legendrian knot.
	Then, one can check that an infinite $\mathbb{R}$-circle $\Gamma$
	is tangent to $\tilde{K}$ at $\tilde{p}=\infty$ if and only if
	$\pi(\Gamma)$ is an affine line in $\mathbb{C}$ parallel to the vector $(1,0)$.

	Let us parametrize $K$ by the arc-length parameter $s$ so that $p=(0,0,0)$ corresponds to $s=0$.
	Let $\ch$ be the signed chord length function defined by \eqref{eq:signed-chord-length},
	and for sufficiently small $\varepsilon>0$,
	we define $p_\pm\in K$ to be the points corresponding to $s=\ch^{-1}(\pm\varepsilon)$.
	Then,
	\begin{equation}
		V(K;p)
		=\lim_{\varepsilon\to+0}\left(\int_{p_+}^{p_-}\frac{dq}{\dK(p,q)^2}-\frac{2}{\varepsilon}\right)
		=\lim_{\varepsilon\to+0}\left(\int_{\tilde{p}_+}^{\tilde{p}_-}d\tilde{q}-\frac{2}{\varepsilon}\right)
	\end{equation}
	by \eqref{eq:inversion-formula-for-horizontal-vector-norm},
	where $\tilde{p}_\pm=\iota(p_\pm)$.

	To identify the latter limit in terms of $\cos\theta_K$, we discuss the asymptotic behavior of
	$\tilde{p}_\pm$ as $\varepsilon\to+0$.
	Recall that $p=(x(s),y(s),u(s))\in K$ can be asymptotically written down in terms of
	the signed curvature $\kappa$ of the vertical projection $\pi(K)$
	as \eqref{eq:x-coord-of-legendrian-curves}, \eqref{eq:y-coord-of-legendrian-curves},
	and \eqref{eq:u-coord-of-legendrian-curves}.
	Some computation shows, therefore,
	that $\tilde{p}=\iota(p)=(\tilde{x}(s),\tilde{y}(s),\tilde{u}(s))$ can be given by
	\begin{equation}
		\tilde{x}(s)=\frac{1}{s}+O(s),\qquad
		\tilde{y}(s)=\frac{\kappa}{6}+O(s),\qquad
		\tilde{u}(s)=-\frac{\kappa}{12}\cdot\frac{1}{s}+\frac{\kappa'}{24}+O(s).
	\end{equation}
	Consequently, in view of \eqref{eq:asymptotics-of-signed-chord-length-inverse}, we have
	\begin{equation}
		\label{eq:asymptotic-behavior-of-infinite-Legendrian-knot}
		\pi(\tilde{p}_\pm)
		=\left(\pm\frac{1}{\varepsilon}+O(\varepsilon),\frac{\kappa}{6}+O(\varepsilon)\right)
		\qquad\text{as $\varepsilon\to+0$}.
	\end{equation}
	By definition, $\theta_{\tilde{K}}(\tilde{p},\tilde{q})$ is the angle between
	$\Gamma_{\tilde{K}}(\infty,\tilde{q})$,
	which is one of the infinite $\mathbb{R}$-circles tangent to $\tilde{K}$ at $\infty$,
	and $\Gamma_{\tilde{K}}(\tilde{q},\infty)$,
	the infinite $\mathbb{R}$-circle tangent to $\tilde{K}$ at $\tilde{q}$.
	The angle they form actually equals the angle in $\mathbb{C}$ formed by
	$\pi(\Gamma_{\tilde{K}}(\infty,\tilde{q}))$ and $\pi(\Gamma_{\tilde{K}}(\tilde{q},\infty))$,
	or equivalently, the angle between the vector $(1,0)$ and
	the tangent vector of $\pi(\tilde{K})$ at $\pi(\tilde{q})$.
	Therefore, by \eqref{eq:asymptotic-behavior-of-infinite-Legendrian-knot},
	\begin{equation}
		\lim_{\varepsilon\to+0}\left(\int_{\tilde{p}_+}^{\tilde{p}_-}\cos\theta_{\tilde{K}}(\tilde{p},\tilde{q})\,d\tilde{q}-\frac{2}{\varepsilon}\right)=0,
	\end{equation}
	and hence we obtain
	\begin{equation}
		V(K;p)
		=\lim_{\varepsilon\to+0}\int_{\tilde{p}_+}^{\tilde{p}_-}(1-\cos\theta_{\tilde{K}}(\tilde{p},\tilde{q}))\,d\tilde{q}
		=\int_K\frac{1-\cos\theta_K(p,q)}{\dK(p,q)^2}\,dq
	\end{equation}
	by using \eqref{eq:inversion-formula-for-horizontal-vector-norm} again.
	This completes the proof.
\end{proof}

Theorem \ref{thm:cosine-formula} recovers Theorem \ref{thm:PU-invariance-of-energy} as a corollary
because the 2-form \eqref{eq:invariant-2-form-on-KxK} is M\"obius invariant.
It also recovers Theorem \ref{thm:minimum-energy-knot} because of the following proposition.

\begin{prop}
	Let $K$ be a Legendrian knot such that
	$\theta_K(p,q)$ is either $0$ or $\pi$ for any $p$, $q\in K$ with $p\not=q$.
	Then $K$ is an $\mathbb{R}$-circle (and hence $\theta_K(p,q)=0$ for any $p$, $q$).
\end{prop}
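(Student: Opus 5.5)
The plan is to normalize the configuration with a Möbius transformation and then read off the angle condition on the vertical projection. Fix any point $p\in K$. Since $\theta_K$ and the class of $\mathbb{R}$-circles are both Möbius invariant, we apply a left translation and a rotation so that $p$ is the origin, and then the inversion $\iota$. This reduces us to the case of an open Legendrian knot $\tilde{K}=\iota(K)$ passing through $\tilde{p}=\infty$, and the hypothesis passes to $\theta_{\tilde{K}}$. If $K$ already passes through $\infty$, we simply take $p=\infty$ and skip this step.

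Next we translate the hypothesis into a statement about $\pi(\tilde{K})$, exactly as in the proof of Theorem \ref{thm:cosine-formula}. The $\mathbb{R}$-circles through $\infty$ tangent to $\tilde{K}$ there are precisely the infinite $\mathbb{R}$-circles whose projections are parallel to a fixed direction $v\in\mathbb{C}$. The projection of $\Gamma_{\tilde{K}}(\tilde{q},\infty)$ is the tangent line of $\pi(\tilde{K})$ at $\pi(\tilde{q})$. Therefore $\theta_{\tilde{K}}(\infty,\tilde{q})$ equals the angle between $v$ and the unit tangent of $\pi(\tilde{K})$ at $\pi(\tilde{q})$, with orientations taken into account.

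The hypothesis says this angle is $0$ or $\pi$ for every $\tilde{q}\in\tilde{K}\setminus\set{\infty}$. So the unit tangent of the plane curve $\pi(\tilde{K})$ is $\pm v$ everywhere. By continuity of the tangent along the connected curve $\tilde{K}\setminus\set{\infty}$, the sign is constant. Hence $\pi(\tilde{K})$ is contained in an affine line parallel to $v$; equivalently, its curvature vanishes identically.

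Since an infinite Legendrian knot whose vertical projection is an affine line is an infinite $\mathbb{R}$-circle (Goldman), $\tilde{K}$ is an infinite $\mathbb{R}$-circle. Pulling back by the Möbius transformation shows that $K$ is an $\mathbb{R}$-circle. For an $\mathbb{R}$-circle, $\Gamma_K(p,q)=\Gamma_K(q,p)=K$, so $\theta_K\equiv0$, which gives the parenthetical claim.

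The step needing most care is the identification of $\theta_{\tilde{K}}(\infty,\tilde{q})$ with the angle in $\mathbb{C}$ between $v$ and the tangent to $\pi(\tilde{K})$. This involves two things:
\begin{itemize}
\item checking that the orientation conventions are compatible at $\infty$;
\item checking that the projection preserves angles between horizontal vectors.
\end{itemize}
Both were already used in the proof of Theorem \ref{thm:cosine-formula}, so we would cite that argument. Fortunately, the dichotomy $0$ or $\pi$ makes orientation issues harmless: only collinearity of the tangent with $v$ is needed.
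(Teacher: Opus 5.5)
Your proof is correct. It rests on the same idea as the paper's proof: fix one point $p_0\in K$ and show that $K$ must follow the family of $\mathbb{R}$-circles through $p_0$ tangent to $K$ there. The execution differs, however.

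The paper stays in the original picture. The tangent lines at $q$ of the $\mathbb{R}$-circles through $p_0$ and $q$ that are tangent to $K$ at $p_0$ form a smooth line field on $\mathcal{H}\setminus\set{p_0}$. The hypothesis $\theta_K(p_0,q)\in\set{0,\pi}$ says exactly that $K$ is an integral curve of this field, and $\Gamma_K(p_0,q_0)$ is another integral curve through $q_0$. Uniqueness of solutions of ODEs then gives $K=\Gamma_K(p_0,q_0)$.

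You instead send $p_0$ to $\infty$. There the same line field becomes the horizontal lift of a constant direction $v$, so the ``ODE'' can be integrated by hand. A connected regular plane curve with unit tangent $\pm v$ is a straight line, and Goldman's description of infinite $\mathbb{R}$-circles finishes the argument.

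The paper's version is shorter and coordinate-free. It leaves implicit both the smoothness of the line field and the passage from coincidence near $q_0$ to coincidence on all of $K$. Yours makes both evident: globality comes from connectedness of $\tilde K\setminus\set{\infty}$. The price is that you invoke the M\"obius invariance of $\theta_K$ and the angle identification already used in the proof of Theorem~\ref{thm:cosine-formula}. Your remark that the dichotomy $0$ or $\pi$ makes the orientation conventions irrelevant is right.

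One small step should be tidied. You only show that $\pi(\tilde K)$ is contained in a line, whereas the criterion you cite requires the projection to be a line. The simplest fix is as follows. By uniqueness of Legendrian lifts, $\tilde K\setminus\set{\infty}$ lies in the infinite $\mathbb{R}$-circle through some $\tilde q_0\in\tilde K$ whose projection is parallel to $v$. An embedded circle in $S^3$ contained in another embedded circle must equal it.
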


\begin{proof}
	Take any $p_0\in K$.
	For each $q\in\mathcal{H}\setminus\set{p_0}$,
	let $L_{p_0,v_0}(q)\subset H_q$ denote the tangent line at $q$ of $\Gamma_K(p_0,q)$.
	Then $L_{p_0,v_0}$ defines a smooth line field on $\mathcal{H}\setminus\set{p_0}$.
	By the assumption, $K$ is an integral curve of this line field.
	On the other hand, for any fixed $q_0\in K\setminus\set{p_0}$,
	$\Gamma_0=\Gamma_K(p_0,q_0)$ is also an integral curve.
	The uniqueness of solutions to ODEs implies that $K$ and $\Gamma_0$ coincide.
\end{proof}

\section{Geometric interpretations of the integrand}
\label{sec:interpretations-of-integrand}

In the classical case of knots in $\mathbb{R}^3$, Langevin--O'Hara \cite{Langevin-OHara-05} pointed out that
the 2-form $dp\,dq/\abs{p-q}^2$ on $(K\times K)\setminus\Delta$, the integrand for the energy,
can be interpreted as the absolute value of the \emph{infinitesimal cross ratio}.
The infinitesimal cross ratio $\Omega$ is the complex 2-form
defined as the cross ratio of four points $p$, $p+dp$, $q$, $q+dq$
under the identification
of the 2-sphere (possibly of infinite radius) passing through these four points and $\mathbb{C}$ via
the stereographic projection.
More precisely, the 2-sphere can be equipped with a canonical orientation,
which ensures the well-definedness of $\Omega$.

In fact, $\Omega$ can be defined as
a complex 2-form on $(\mathbb{R}^3\times\mathbb{R}^3)\setminus\Delta$
or on $(S^3\times S^3)\setminus\Delta$, where $S^3$ is the one-point compactification of $\mathbb{R}^3$,
and the infinitesimal cross ratio for each individual knot $K$ is then the pullback of $\Omega$ to
$(K\times K)\setminus\Delta$.
From this perspective, one can also observe that the real part of $\Omega$ equals the canonical symplectic form
of the cotangent bundle $T^*S^3$
via a natural identification $(S^3\times S^3)\setminus\Delta\cong T^*S^3$
(see \cite{Langevin-OHara-05}*{Lemma 4.16}),
and the angle relative to $K$ is nothing but the absolute value of the argument of $\Omega$.
Therefore, the classical cosine formula \eqref{eq:classical-cosine-formula} can be rewritten
purely in terms of $\Omega$ as
\begin{equation}
	E(K)=\iint_{K\times K}\abs{\Omega}-\Re\Omega.
\end{equation}
This expression can be used to recover the original definition \eqref{eq:OHara-energy-for-Euclidean-knots}
of the energy,
thereby closing the circle of arguments.

In what follows, we discuss the integrand $dp\,dq/\dK(p,q)^2$ for our energy for Legendrian knots
from similar viewpoints.

Following Kor\'anyi--Reimann \cite{Koranyi-Reimann-87},
for $p=(z,u)\in\mathcal{H}$, let
\begin{equation}
	A(p)=\abs{z}^2-4iu
\end{equation}
(the factor $-4$ is due to our normalization for the group structure on $\mathcal{H}$).
Note that
\begin{equation}
	\iota(p)=\left(\frac{z}{\conj{A(p)}},-\frac{u}{\abs{A(p)}^2}\right)
	\qquad\text{for $p=(z,u)$}.
\end{equation}
We also have $\normH{p}=\abs{A(p)}^{1/2}$ and hence $\dK(p,q)=\abs{A(p^{-1}q)}^{1/2}$.
Furthermore, $A(p^{-1})=\conj{A(p)}$, $A(\iota(p))=1/A(p)$, and
\begin{equation}
	\label{eq:A-pinv-q}
	A(p^{-1}q)=\conj{A(p)}+A(q)-2z\conj{z}'
	\qquad\text{for $p=(z,u)$ and $q=(z',u')$}.
\end{equation}

The \emph{complex cross ratio} as defined in \cite{Koranyi-Reimann-87} is given by
\begin{equation}
	\mathbb{X}(p_1,p_2,p_3,p_4)
	=\frac{A(p_3^{-1}p_1)A(p_4^{-1}p_2)}{A(p_4^{-1}p_1)A(p_3^{-1}p_2)}.
\end{equation}
It is known to be M\"obius invariant, i.e., $\mathbb{X}(T(p_1),T(p_2),T(p_3),T(p_4))=\mathbb{X}(p_1,p_2,p_3,p_4)$
for any $T\in\PU(2,1)$.
The following proposition shows that the ``infinitesimal complex cross ratio''
gives the square of the integrand for our Heisenberg energy.

\begin{prop}
	\label{prop:infinitesimal-complex-cross-ratio}
	For a Legendrian curve $\gamma$,
	\begin{equation}
		\lim_{\Delta s,\,\Delta t\to 0}
		\frac{\mathbb{X}(\gamma(s),\gamma(t),\gamma(s+\Delta s),\gamma(t+\Delta t))}{(\Delta s)^2(\Delta t)^2}
		=\left(\frac{\abs{\gamma'(s)}\abs{\gamma'(t)}}{\normH{\gamma(s)^{-1}\gamma(t)}^2}\right)^2
	\end{equation}
	holds for $s\not=t$.
\end{prop}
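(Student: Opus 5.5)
The plan is to split the cross ratio into its ``near'' factors (numerator) and ``far'' factors (denominator), and treat each separately. With $p_1=\gamma(s)$, $p_2=\gamma(t)$, $p_3=\gamma(s+\Delta s)$, $p_4=\gamma(t+\Delta t)$, we have
\begin{equation}
	\mathbb{X}(p_1,p_2,p_3,p_4)
	=\frac{A(p_3^{-1}p_1)}{(\Delta s)^2}\cdot\frac{A(p_4^{-1}p_2)}{(\Delta t)^2}
	\cdot\frac{1}{A(p_4^{-1}p_1)A(p_3^{-1}p_2)}.
\end{equation}
I will show that the first two factors tend to $\abs{\gamma'(s)}^2$ and $\abs{\gamma'(t)}^2$. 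I will then show that the last factor tends to $1/\normH{\gamma(s)^{-1}\gamma(t)}^4$.

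\emph{Far factors.} The function $A$ is continuous on $\mathcal{H}$ and vanishes only at the identity, since $A(z,u)=0$ forces $z=0$ and $u=0$. The group operations are also continuous. Hence, as $\Delta s,\Delta t\to0$,
\begin{equation}
	A(p_4^{-1}p_1)\to A(\gamma(t)^{-1}\gamma(s)),\qquad
	A(p_3^{-1}p_2)\to A(\gamma(s)^{-1}\gamma(t)),
\end{equation}
and both limits are nonzero because $s\neq t$ and $\gamma$ has no self-intersections. Now $\gamma(t)^{-1}\gamma(s)=(\gamma(s)^{-1}\gamma(t))^{-1}$, and $A(p^{-1})=\conj{A(p)}$. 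So the product of the limits is $\abs{A(\gamma(s)^{-1}\gamma(t))}^2=\normH{\gamma(s)^{-1}\gamma(t)}^4$, using $\normH{p}=\abs{A(p)}^{1/2}$.

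\emph{Near factors.} By left-invariance of the contact structure and of $\abs{\orddot}$, the curve $\sigma(h)=\gamma(s)^{-1}\gamma(s+h)$ is Legendrian with $\sigma(0)=0$ and $\abs{\sigma'(0)}=\abs{\gamma'(s)}$. Then $p_3^{-1}p_1=\sigma(\Delta s)^{-1}$, so $A(p_3^{-1}p_1)=\conj{A(\sigma(\Delta s))}$. Write $\sigma(h)=(z(h),u(h))$. As in the proof of Lemma~\ref{lem:infinitesimal-Koranyi-distance}, formula \eqref{eq:u-coordinate-change-along-Legendrian-path} gives $u(h)=O(h^3)$, and $z(h)=z'(0)h+O(h^2)$. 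Therefore
\begin{equation}
	A(\sigma(h))=\abs{z'(0)}^2h^2+O(h^3)=\abs{\gamma'(s)}^2h^2+O(h^3),
\end{equation}
and $A(p_3^{-1}p_1)/(\Delta s)^2\to\abs{\gamma'(s)}^2$. The same argument at $t$ gives $A(p_4^{-1}p_2)/(\Delta t)^2\to\abs{\gamma'(t)}^2$.

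Multiplying the three limits gives $\bigl(\abs{\gamma'(s)}\abs{\gamma'(t)}/\normH{\gamma(s)^{-1}\gamma(t)}^2\bigr)^2$, as claimed. The limit is a joint limit in $(\Delta s,\Delta t)$, but the numerator factors each depend on only one variable and the denominator is jointly continuous, so no extra uniformity is needed.

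The only real subtlety is the near factor. One must make sure that the vertical component of $p_3^{-1}p_1$ is genuinely $O((\Delta s)^3)$, so that the imaginary part $-4iu$ does not contribute at order $(\Delta s)^2$. This is exactly where the Legendrian hypothesis enters. The reduction to a Legendrian curve through the origin via left translation handles it cleanly, and then it is just the estimate already used in Lemma~\ref{lem:infinitesimal-Koranyi-distance}.
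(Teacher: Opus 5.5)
Your proof is correct and follows essentially the same route as the paper. The near factors are $\abs{\gamma'}^2h^2+O(h^3)$ because the Legendrian condition makes the vertical component of $\gamma(s)^{-1}\gamma(s+h)$ of order $O(h^3)$, and the far factors converge by continuity to $\conj{A(\gamma(s)^{-1}\gamma(t))}A(\gamma(s)^{-1}\gamma(t))=\normH{\gamma(s)^{-1}\gamma(t)}^4$; the only cosmetic differences are that the paper works with $\conj{\mathbb{X}}$ after normalizing to arc length, while you treat $\mathbb{X}$ directly for a general regular parametrization.
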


\begin{proof}
	Without loss of generality, we may assume that $\gamma$ is parametrized by arc length.
	We evaluate the numerator and the denominator of
	\begin{equation}
		\conj{\mathbb{X}(\gamma(s),\gamma(t),\gamma(s+\Delta s),\gamma(t+\Delta t))}
		=\frac{A(\gamma(s)^{-1}\gamma(s+\Delta s))A(\gamma(t)^{-1}\gamma(t+\Delta t))}{A(\gamma(s)^{-1}\gamma(t+\Delta t))A(\gamma(t)^{-1}\gamma(s+\Delta s))}.
	\end{equation}
	For the numerator,
	applying the asymptotic expansions \eqref{eq:x-coord-of-legendrian-curves},
	\eqref{eq:y-coord-of-legendrian-curves}, \eqref{eq:u-coord-of-legendrian-curves} for Legendrian curves,
	we obtain
	\begin{equation}
		\gamma(s)^{-1}\gamma(s+\Delta s)
		=(\Delta s+O((\Delta s)^3),O((\Delta s)^2),O((\Delta s)^3))
	\end{equation}
	and hence
	\begin{equation}
		A(\gamma(s)^{-1}\gamma(s+\Delta s))
		=(\Delta s)^2+O((\Delta s)^3).
	\end{equation}
	For the denominator,
	\begin{equation}
		\begin{split}
			A(\gamma(s)^{-1}\gamma(t+\Delta t))A(\gamma(t)^{-1}\gamma(s+\Delta s))
			&=\abs{A(\gamma(s)^{-1}\gamma(t))}^2+O(\Delta s)+O(\Delta t)\\
			&=\normH{\gamma(s)^{-1}\gamma(t)}^4+O(\Delta s)+O(\Delta t).
		\end{split}
	\end{equation}
	Combining these estimates yields the desired result.
\end{proof}

Proposition \ref{prop:infinitesimal-complex-cross-ratio} shows that the
``infinitesimal complex cross ratio'' itself does not play the same role
as $\comptwoform$ in the classical case.
What corresponds to $\Omega$ is the complex 2-form
\begin{equation}
	\label{eq:2-form-on-HxH}
	\comptwoformH=d_pd_q\log\rho,\qquad
	\rho=\rho(p,q)=A(p^{-1}q)
\end{equation}
on $(\mathcal{H}\times\mathcal{H})\setminus\Delta$.
The 2-form $\comptwoformH$ is invariant under the diagonal action of M\"obius transformations.
Actually, the function $\rho$ is already invariant for left-translations and rotations, and for dilations we have
\begin{equation}
	\rho(\delta_\lambda(p),\delta_\lambda(q))=\lambda^2\rho(p,q).
\end{equation}
For the inversion, it follows from \eqref{eq:A-pinv-q} that
\begin{equation}
	\log\rho(\iota(p),\iota(q))=\log\rho(p,q)-\log\conj{A(p)}-\log A(q),
\end{equation}
and hence
\begin{equation}
	(\iota\times\iota)^*\comptwoformH
	=d_pd_q\log\rho(\iota(p),\iota(q))
	=d_pd_q\log\rho(p,q)
	=\comptwoformH.
\end{equation}

\begin{prop}
	\label{prop:complex-2-form}
	Let $(p,q)\in(\mathcal{H}\times\mathcal{H})\setminus\Delta$.
	Then, for any horizontal vectors $v\in H_p$ and $v'\in H_q$, we have
	\begin{equation}
		\label{eq:absolute-value-of-2-form}
		\abs{\comptwoformH(v,v')}=2\cdot\frac{\abs{v}\abs{v'}}{\dK(p,q)^2}.
	\end{equation}
	Moreover, if $K$ is a Legendrian knot passing through $p$, $q$
	and $v$ and $v'$ are tangent to $K$, then
	the absolute value of the argument of $\comptwoformH(v,v')$ equals $\theta_K(p,q)$.
\end{prop}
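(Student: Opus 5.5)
Both assertions concern quantities invariant under the diagonal action of M\"obius transformations, so the plan is to normalize the pair $(p,q)$ first. The invariance $(T\times T)^*\comptwoformH=\comptwoformH$ was just checked. Combined with the conformality of $\PU(2,1)$ on $H$ and \eqref{eq:inversion-formula-for-horizontal-vector-norm}, \eqref{eq:inversion-formula-for-Koranyi-distance}, this shows that the quotient $\abs{v}\abs{v'}/\dK(p,q)^2$ is invariant under the generators. Indeed, for the inversion the factors $\normH{p}^{-2}\normH{q}^{-2}$ cancel, and for dilations the factors $\lambda^2$ cancel. The angle $\theta_K(p,q)$ is invariant as well. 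Hence I may apply a left translation to put $p=(0,0,0)$. I would not move $q$ to $\infty$, since the form is only defined on $\mathcal{H}\times\mathcal{H}$. Instead I will compute directly with $p=(0,0,0)$ and $q=(z',u')$ general; this should be manageable.

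\textbf{Computation of $\comptwoformH$.} By \eqref{eq:A-pinv-q}, $\rho(p,q)=\conj{A(p)}+A(q)-2z\conj{z}'$. I compute $d_q\log\rho=d_q\rho/\rho$, then apply $d_p$:
\begin{equation}
	\comptwoformH=\frac{d_pd_q\rho}{\rho}-\frac{d_p\rho\wedge d_q\rho}{\rho^2}.
\end{equation}
The sign convention for placing $p$-forms before $q$-forms is fixed once and for all. At $p=0$ we have $z=0$ and $u=0$, so $\rho=A(q)$. Also $d_p\rho$ evaluated on a horizontal $v=(a+ib)$ at the origin equals $-2\,dz(v)\,\conj{z}'$, because $d\conj{A}=d(\abs{z}^2)+4i\,du$ vanishes on $H_0$ to first order. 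Moreover $d_pd_q\rho=-2\,dz\wedge d\conj{z}'$. Let $w=dz(v)$ and let $w'$ be the complex number representing $v'$ through $d\conj{z}'$. On the $q$ side, $d_q\rho(v')=dA(v')$ with $dA=z'\,d\conj{z}'+\conj{z}'\,dz'-4i\,du'$. On $H_q$ we have $du'=-\tfrac12\Im(z'd\conj{z}')$, and substituting gives $dA(v')=2z'\conj{w'}$. Consequently
\begin{equation}
	\comptwoformH(v,v')=-\frac{2w\conj{w'}}{A(q)}+\frac{(-2w\conj{z}')(2z'\conj{w'})}{A(q)^2}
	=-\frac{2w\conj{w'}}{A(q)^2}\bigl(A(q)+2\abs{z'}^2\bigr).
\end{equation}
Since $A(q)+2\abs{z'}^2=3\abs{z'}^2-4iu'$ does not have modulus $\abs{A(q)}$, I expect an error in my bookkeeping here. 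I will redo this step carefully, keeping careful track of $\conj{A(p)}$ versus $A(p)$ and of the identification of $v'$. The target is to obtain $-2w\conj{w'}\conj{A(q)}/A(q)^2$ (or its conjugate). Its modulus is $2\abs{w}\abs{w'}/\abs{A(q)}=2\abs{v}\abs{v'}/\dK(p,q)^2$, which gives \eqref{eq:absolute-value-of-2-form}. This algebra is the main obstacle, and the place where I expect the factor $2$ to appear.

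\textbf{The argument.} Once the formula $\comptwoformH(v,v')=c\,w\conj{w'}\conj{A(q)}/A(q)^2$ is established at $p=0$, I will use invariance once more and apply an inversion. This sends $p$ to $\infty$, but that is awkward. It is simpler to choose $q$ on the standard $\mathbb{R}$-circle $\Gamma_{(1,0)}$ and $v$ tangent to it, so that $q=(x',0,0)$ and $A(q)=x'^2$ is real positive. Then $\arg\comptwoformH(v,v')=\arg(c\,w\conj{w'})$. The $\mathbb{R}$-circle $\Gamma_K(p,q)=\Gamma_{(1,0)}$ is the real line, and $\Gamma_K(q,p)$ is the infinite $\mathbb{R}$-circle through $0$ and $q$ tangent to $v'$. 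Since $\Gamma_{(a,b)}$ passes through $0$ and through $q$ only for $b=0$ (unless it is the finite circle), I would rather take $q$ general on $\Gamma_{(1,0)}$ and compute $\Gamma_K(q,p)$ via inversion centered at $q$. Equivalently, and more cleanly, I would use the invariance of $\arg\comptwoformH$ to send $q\to\infty$ after the formula is established in the limit. In that limit $\arg$ becomes the angle between $w$ and $w'$ as vectors in $\mathbb{C}$, and by the discussion preceding Theorem~\ref{thm:cosine-formula} this is exactly $\theta_K(p,q)$. The constant $c=-2$ together with the orientation convention ($w'$ versus $-w'$ under inversion at $\infty$) is what I must check so that the angle, rather than its supplement, appears.
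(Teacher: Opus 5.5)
Your strategy is the paper's: evaluate $\comptwoformH$ on horizontal vectors, then normalize by a M\"obius transformation to read off the argument. As written, however, neither assertion is established.

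\textbf{The modulus.} Your ingredients are all correct: $d_p\rho(v)=-2w\conj{z}'$, $d_q\rho(v')=2z'\conj{w'}$ with $w'=dz'(v')$, and $d_pd_q\rho(v,v')=-2w\conj{w'}$. The bookkeeping error has nothing to do with $\conj{A(p)}$ versus $A(p)$ or with the identification of $v'$. You wrote $\comptwoformH=d_pd_q\rho/\rho-d_p\rho\wedge d_q\rho/\rho^2$ correctly, but then evaluated the second term with a plus sign. With the correct sign,
\[
\comptwoformH(v,v')=\frac{w\conj{w'}}{A(q)^2}\bigl(4\abs{z'}^2-2A(q)\bigr)=\frac{2w\conj{w'}\,\conj{A(q)}}{A(q)^2},
\]
since $2\abs{z'}^2-A(q)=\abs{z'}^2+4iu'=\conj{A(q)}$. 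The identity \eqref{eq:absolute-value-of-2-form} then follows at once. The paper does the same computation for general $p$, using $\Re\rho=\abs{z'-z}^2$ to get $\comptwoformH\equiv 2\conj{\rho}\rho^{-2}\,dz\wedge d\conj{z}'$ modulo $\theta,\theta'$. Note that the constant is $+2$, not the $-2$ of your stated target. With $-2$ the second assertion would yield $\pi-\theta_K(p,q)$, and you left this sign unresolved.

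\textbf{The argument.} The missing idea is that you never need to determine $\Gamma_K(q,p)$, because $\theta_K(p,q)$ may be measured at $q$, as shown right after its definition.
\begin{itemize}
\item Normalize as you propose: $p=(0,0,0)$, $q=(x',0,0)$ with $x'>0$ (the paper takes $x'=1$), $v$ a positive multiple of $X_p$, and $v,v'$ positively oriented along $K$.
\item Then $\Gamma_K(p,q)$ is the $x$-axis, whose oriented tangent at $q$ is $X_q$. By definition, $\Gamma_K(q,p)$ is tangent to $v'$ at $q$.
\item Since $dz'$ maps $H_q$ isometrically and orientation-preservingly onto $\mathbb{C}$ with $dz'(X_q)=1$, this gives $\theta_K(p,q)=\abs{\arg w'}$.
\item Because $A(q)=x'^2>0$ and $w>0$, the corrected formula reads $\comptwoformH(v,v')=2w\conj{w'}/x'^2$. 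Its argument is $-\arg w'$, which is exactly the claim.
\end{itemize}

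Your alternative of sending $q\to\infty$ ``in the limit'' is not an argument as it stands. The form $\comptwoformH$ is not defined at $\infty$. Moreover, the factor $\conj{A(q)}/A(q)^2$ has phase $-3\arg A(q)$, which has no limit as $q\to\infty$: it is $0$ along the $x$-axis but $-\pi/2$ along the positive $u$-axis. So nothing forces $\arg\comptwoformH$ to become the angle between $w$ and $w'$. Once the angle is read off at $q$, no inversion and no ``$w'$ versus $-w'$'' orientation issue arises at all.
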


\begin{proof}
	Let $p=(z,u)$ and $q=(z',u')$ for general $p$ and $q$.
	Then, \eqref{eq:A-pinv-q} implies
	\begin{equation}
		\rho(p,q)=\abs{z}^2+\abs{z'}^2-4i(u'-u)-2z\conj{z}'.
	\end{equation}
	A straightforward computation shows that
	\begin{equation}
		d_p\rho=-2(\conj{z}'-\conj{z})\,dz+4i\theta,\qquad
		d_q\rho=2(z'-z)\,d\conj{z}'-4i\theta',\qquad
		d_pd_q\rho=-2\,dz\wedge d\conj{z}',
	\end{equation}
	where $\theta$ and $\theta'$ denote the left-invariant contact form \eqref{eq:contact-form}
	on each factor of $\mathcal{H}\times\mathcal{H}$.
	Consequently,
	\begin{equation}
		\begin{split}
			\comptwoformH
			=\frac{d_pd_q\rho}{\rho}-\frac{d_p\rho\wedge d_q\rho}{\rho^2}
			\equiv 2\frac{\conj{\rho}}{\rho^2}\,dz\wedge d\conj{z}'
			\mod\theta,\,\theta'.
		\end{split}
	\end{equation}
	This implies \eqref{eq:absolute-value-of-2-form}
	because $\abs{dz(v)}=\abs{v}$, $\abs{d\conj{z}'(v')}=\abs{v'}$,
	and $\abs{\rho}=\abs{A(p^{-1}q)}=\dK(p,q)^2$.

	To show the latter assertion, we use its M\"obius invariance:
	we may assume without losing generality that $p=(0,0,0)$, $q=(1,0,0)$, and $v=(1,0,0)$.
	Then the desired result immediately follows because $dz(v)=1$ and
	$d\conj{z}'(v')=\abs{v'}e^{\pm i\theta_K(p,q)}$.
\end{proof}

By Theorem \ref{thm:cosine-formula} and Proposition \ref{prop:complex-2-form},
we obtain the following corollary.

\begin{cor}
	For any Legendrian knot $K$,
	\begin{equation}
		\label{eq:energy-as-abs-minus-real-part}
		E(K)=\frac{1}{2}\iint_K\abs{\comptwoformH}-\Re\comptwoformH
	\end{equation}
	holds.
\end{cor}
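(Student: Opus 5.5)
The corollary follows by combining the cosine formula (Theorem~\ref{thm:cosine-formula}) with the pointwise description of $\comptwoformH$ in Proposition~\ref{prop:complex-2-form}. Both sides are integrals of 2-forms (or densities) on $(K\times K)\setminus\Delta$. So it suffices to identify the integrands, pulled back along $\gamma\times\gamma$, at each pair $(s,s')$ with $\gamma(s)\neq\gamma(s')$, and then to check that the integrals are interpreted in the same (improper) sense.

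First, the pointwise identification. Parametrize $K$ by arc length $s$ and evaluate the pullback $(\gamma\times\gamma)^*\comptwoformH$ on $\partial_s\wedge\partial_{s'}$. Since $\comptwoformH=d_pd_q\log\rho$ is a mixed 2-form, its pullback is $\comptwoformH(\gamma'(s),\gamma'(s'))\,ds\wedge ds'$. Here $\gamma'(s)\in H_{\gamma(s)}$, $\gamma'(s')\in H_{\gamma(s')}$, and both vectors are unit and tangent to $K$. Proposition~\ref{prop:complex-2-form} then gives
\begin{equation}
	\abs{\comptwoformH(\gamma'(s),\gamma'(s'))}=\frac{2}{\dK(\gamma(s),\gamma(s'))^2},
\end{equation}
and the argument of $\comptwoformH(\gamma'(s),\gamma'(s'))$ is $\pm\theta_K(\gamma(s),\gamma(s'))$. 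Hence
\begin{equation}
	\Re\comptwoformH(\gamma'(s),\gamma'(s'))=\frac{2\cos\theta_K(\gamma(s),\gamma(s'))}{\dK(\gamma(s),\gamma(s'))^2},
\end{equation}
and $\frac12(\abs{\comptwoformH}-\Re\comptwoformH)$ equals the cosine-formula integrand $(1-\cos\theta_K)/\dK^2\,ds\,ds'$.

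Some care is needed with orientation. The value of $\comptwoformH(v,v')$ depends on the orientations of $v$ and $v'$, but $\theta_K$ is defined using a single orientation of $K$ inducing both. With one consistent orientation, $\Re$ and $\abs{\cdot}$ transform in the same way under the orientation of $K\times K$. So the integrals of $\abs{\comptwoformH}$ and $\Re\comptwoformH$ are taken with respect to the product orientation, which makes them nonnegative measures as intended.

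Second, integrate and apply Theorem~\ref{thm:cosine-formula} to get $E(K)=\frac12\iint_K\abs{\comptwoformH}-\Re\comptwoformH$. The only delicate point is convergence near the diagonal. Neither $\iint\abs{\comptwoformH}$ nor $\iint\Re\comptwoformH$ converges separately, so the right-hand side must be read as the integral of the combined nonnegative integrand $\abs{\comptwoformH}-\Re\comptwoformH$. This is exactly how the cosine formula is meant; its proof shows that $(1-\cos\theta_K)/\dK^2$ is integrable, since $1-\cos\theta_K=O(\dK^2)$ near the diagonal by the local expansions of \S\ref{sec:beta-function}. I expect this interpretive point, together with the orientation bookkeeping, to be the only real obstacle; everything else is direct substitution.

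Finally, for open knots the same argument applies verbatim: the pointwise identity holds, and integrability at $\infty$ is handled as in the extension of the pointwise formula \eqref{eq:pointwise-cosine-formula}.
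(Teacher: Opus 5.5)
Your proposal is correct and is the paper's argument: substitute Proposition~\ref{prop:complex-2-form} into Theorem~\ref{thm:cosine-formula}. For co-oriented tangent vectors the proposition gives modulus $2\abs{v}\abs{v'}/\dK(p,q)^2$ and argument $\pm\theta_K(p,q)$, so $\frac12(\abs{\comptwoformH}-\Re\comptwoformH)$ is exactly the cosine-formula integrand. Your remarks on orientation and on reading the right-hand side as the integral of the nonnegative combined integrand are bookkeeping the paper leaves implicit. For open knots, the cleanest justification is not the pointwise formula. Instead, move $K$ to a finite knot by a M\"obius transformation and use the invariance of both $E$ and the integrand.
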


The relationship between the real part of $\comptwoformH$ and the canonical symplectic form
on $T^*\mathcal{H}$ can be established as follows.
Let us consider the 1-form $\alpha$ on $(\mathcal{H}\times\mathcal{H})\setminus\Delta$
given by
\begin{equation}
	\alpha=\Re d_q\log\rho,
\end{equation}
where $\rho$ is defined as in \eqref{eq:2-form-on-HxH}.
Note that, for each fixed $p\in\mathcal{H}$, $\alpha_{(p,\orddot)}$ is a 1-form on $\mathcal{H}\setminus\set{p}$,
and it is actually the pullback of the 1-form $\Re d\log\rho(0,\orddot)$ on $\mathcal{H}\setminus\set{0}$
by the left translation $L_{p^{-1}}$.

\begin{prop}
	\label{prop:comparison-with-symplectic-form}
	(1) The mapping $F\colon(\mathcal{H}\times\mathcal{H})\setminus\Delta\to T^*\mathcal{H}$ defined by
	\begin{equation}
		F(p,q)=(q,\alpha_{(p,q)})
	\end{equation}
	provides a diffeomorphism between
	$(\mathcal{H}\times\mathcal{H})\setminus\Delta$
	and $T^*\mathcal{H}\setminus 0_\mathcal{H}$, where $0_\mathcal{H}$ denotes the zero section.

	(2) The real part of $\comptwoformH$ is equal to the pullback of the canonical symplectic form on
	$T^*\mathcal{H}$ by $F$.
\end{prop}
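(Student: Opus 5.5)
The plan is to prove (2) first, since it is formal, and then prove (1) by reducing to an explicit computation at the origin. There the fibre map turns out to be the Heisenberg inversion $\iota$ followed by a linear isomorphism.

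For (2), let $\lambda$ be the tautological $1$-form on $T^*\mathcal{H}$, so that the canonical symplectic form is $d\lambda$ (with the sign convention fixed to make the statement true). Since $F$ covers the second projection $(p,q)\mapsto q$, the definition of $\lambda$ gives $F^*\lambda=\alpha$. Here $\alpha=\Re d_q\log\rho$ is viewed as a $1$-form on the product having only $q$-components. Writing $d=d_p+d_q$ on $(\mathcal{H}\times\mathcal{H})\setminus\Delta$ and using $d_q d_q=0$, we get
\begin{equation}
	F^*d\lambda=d\alpha=d_p\Re d_q\log\rho=\Re d_pd_q\log\rho=\Re\comptwoformH .
\end{equation}
The only point to watch is the ordering and sign convention in $d_pd_q$, which must match \eqref{eq:2-form-on-HxH}.

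For (1), first note that $\Re\log\rho=\log\abs{\rho}=2\log\dK(p,q)$, so $\alpha_{(p,q)}=2\,d_q\log\dK(p,q)$. The Kor\'anyi distance is invariant under the diagonal left action. Hence the left translation $L_q$ intertwines the fibre map $p\mapsto\alpha_{(p,q)}\in T^*_q\mathcal{H}$ with the corresponding map at $q=0$, conjugated by the induced isomorphism $T^*_0\mathcal{H}\cong T^*_q\mathcal{H}$. It therefore suffices to show two things. First, $F$ is smooth, which is clear since $\rho\neq0$ off $\Delta$. Second, the map $G\colon\mathcal{H}\setminus\set{0}\to T^*_0\mathcal{H}\setminus\set{0}$, $p\mapsto d_q\log\abs{\rho(p,q)}^2\big|_{q=0}$, is a diffeomorphism. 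Because the reduction depends smoothly on $q$, it then follows that $F$ is a bijective local diffeomorphism onto $T^*\mathcal{H}\setminus0_\mathcal{H}$.

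To compute $G$, take $p=(z,u)$ and $q=(z',u')$ and use the explicit formula $\rho=\abs{z}^2+\abs{z'}^2-4i(u'-u)-2z\conj{z}'$ from the proof of Proposition~\ref{prop:complex-2-form}. Write $\abs{\rho}^2=a^2+b^2$ with $a=\abs{z-z'}^2$ and $b$ the imaginary part. At $q=0$ we have $a=\abs{z}^2$ and $b=4u$, up to sign, so $a+ib$ is $A(p)$ or $\conj{A(p)}$. Differentiating in $q$ at $0$:
\begin{itemize}
	\item the $du'$-component of $G(p)$ is a nonzero constant times $u/\abs{A(p)}^2$;
	\item the $(dx',dy')$-part is a nonzero constant times $\Re\bigl(w\,d\conj{z}'\bigr)$ with $w=z\cdot A(p)/\abs{A(p)}^2$, that is, $w=z/\conj{A(p)}$ up to conjugation.
\end{itemize}
Comparing with the formula $\iota(p)=\bigl(z/\conj{A(p)},-u/\abs{A(p)}^2\bigr)$, we see that $G=M\compose\iota$, where $M\colon\mathcal{H}\to T^*_0\mathcal{H}$ is a fixed real-linear isomorphism (possibly including a complex conjugation). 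Since $\iota$ is an involutive diffeomorphism of $\mathcal{H}\setminus\set{0}$, $G$ is a diffeomorphism onto $T^*_0\mathcal{H}\setminus\set{0}$.

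The main obstacle is this last computation. One has to keep track of the signs and conjugations carefully enough to confirm that the horizontal and vertical components combine exactly into $\iota(p)$, rather than into some other map that might fail to be injective. If the match is off by a conjugation or a sign in the $u$-coordinate, this is absorbed into $M$, and $M$ remains invertible.
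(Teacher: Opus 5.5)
Your proposal is correct and follows essentially the paper's argument. Part (2) is the same tautological-form computation. For part (1), the paper likewise uses left-invariance to reduce to a single fibre map, which it identifies as the Heisenberg inversion composed with linear coordinate changes; the only difference is that it writes $\alpha_{(p,q)}$ in the left-invariant coframe $dz',d\conj{z}',\theta'$ with coefficients depending on $p^{-1}q$, instead of normalizing $q=0$. Your computation also checks out exactly, giving $G(p)=\bigl(-4\Re(A(p)z\,d\conj{z}')-32u\,du'\bigr)/\abs{A(p)}^2$, so the closing hedge about signs and conjugations is unnecessary.
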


It should be noted that the definition of $F$ is perhaps less geometrically natural
than the identification $(S^3\times S^3)\setminus\Delta\cong T^*S^3$ in the classical case.

\begin{proof}[Proof of Proposition \ref{prop:comparison-with-symplectic-form}]
	(1) 
	We write $p=(z,u)$ and $q=(z',u')$. Then,
	\begin{equation}
		\Re d\log\rho(0,q)
		=\Re d\log A(q)=\frac{1}{2}d\log\abs{A(q)}^2
		=\frac{\abs{z'}^2(\conj{z}'dz'+z'd\conj{z}')+16u'du'}{\abs{A(q)}^2}
	\end{equation}
	and hence
	\begin{equation}
		\begin{split}
			\alpha_{(p,q)}
			&=(L_{p^{-1}}^*(\Re d\log\rho(0,\orddot)))_q \\
			&=\frac{\abs{z'-z}^2((\conj{z}'-\conj{z})dz'+(z'-z)d\conj{z}')+16(u'-u+\frac{1}{2}\Im(z\conj{z}'))(du'+\frac{1}{2}\Im(z\,d\conj{z}'))}{\abs{A(p^{-1}q)}^2}.
		\end{split}
	\end{equation}
	If we write $r=p^{-1}q=(z'',u'')$, we obtain
	\begin{equation}
		\label{eq:good-expression-of-alpha}
		\begin{split}
			\alpha_{(p,q)}
			&=\frac{\abs{z''}^2(\conj{z}''dz'+z''d\conj{z}')+16u''(\theta'-\frac{1}{2}\Im(z''d\conj{z}'))}{\abs{A(r)}^2} \\
			&=\frac{\conj{z}''}{\conj{A(r)}}\,dz'+\frac{z''}{A(r)}\,d\conj{z}'+\frac{16u''}{\abs{A(r)}^2}\,\theta'.
		\end{split}
	\end{equation}
	Since
	\begin{equation}
		\mathcal{H}\setminus\set{0}\to\mathcal{H}\setminus\set{0},\qquad
		r=(z'',u'')\mapsto\left(\frac{z''}{A(r)},\frac{16u''}{\abs{A(r)}^2}\right)
	\end{equation}
	has a smooth inverse (because it is the composition of $(z'',u'')\mapsto (z'',-u'')$, the inversion,
	and $(z'',u'')\mapsto (z'',16u'')$),
	it follows that $F$ is a diffeomorphism onto $T^*\mathcal{H}\setminus 0_\mathcal{H}$.

	(2) This is obvious because $d\alpha=\Re\comptwoformH$ and
	$\alpha$ is the pullback of the tautological 1-form on $T^*\mathcal{H}$.
\end{proof}

\begin{rem}
	As in the classical Euclidean case,
	we can derive the expression \eqref{eq:OHara-type-formulation-of-Legendrian-knot-energy} of the energy
	based on Hadamard regularization directly from \eqref{eq:energy-as-abs-minus-real-part}.
	Suppose that $K$ is the image of $\gamma$ and is parametrized by the arc length. 
	Put
	\begin{equation}
		D_\varepsilon=(K\times K)\setminus\Delta_\varepsilon,\qquad
		\Delta_\varepsilon=\set{(p,q)\in K\times K|\dK(p,q)<\varepsilon}.
	\end{equation}
	Then
	\begin{equation}
		\begin{split}
			E(K)
			&=\frac{1}{2}\lim_{\varepsilon\to+0}\iint_{D_\varepsilon}\abs{\comptwoformH}-\Re\comptwoformH \\
			&=\lim_{\varepsilon\to+0}\left(\iint_{\dK(p,q)\geqq\varepsilon}\frac{dp\,dq}{\dK(p,q)^2}-\frac{1}{2}\int_{\bdry D_\varepsilon}\alpha\right),
		\end{split}
	\end{equation}
	where $\alpha=\Re d_q\log\rho$ as before. 
	Let $p=(z,u)$, $q=(z',u')$, and
	\begin{equation}
		v=\xi X_q+\eta Y_q=(\xi+i\eta)Z_q+(\xi-i\eta)\conj{Z}_q
	\end{equation}
	be the velocity vector of $\gamma$ at $q$, where $X$, $Y$, and $Z$ are given by
	\eqref{eq:horizontal-vector-fields} and \eqref{eq:complex-horizontal-vector-field}.
	Then, by \eqref{eq:good-expression-of-alpha},
	\begin{equation}
		\begin{split}
			\alpha(v)
			&=2\Re\frac{\rho(p,q)(\conj{z}'-\conj{z})(\xi+i\eta)}{\dK(p,q)^4}\\
			&=2\frac{(\Re\rho(p,q))((x'-x)\xi+(y'-y)\eta)-(\Im\rho(p,q))(-(y'-y)\xi+(x'-x)\eta)}{\dK(p,q)^4}.
		\end{split}
	\end{equation}
	Note that $\bdry D_\varepsilon=\Gamma_+\cup(-\Gamma_-)$,
	where $\Gamma_\pm=\set{(p,q)|\gamma^{-1}(q)-\gamma^{-1}(p)\equiv\pm\varepsilon\ \text{mod.~$L$}}$,
	where $L$ is the length of $K$. 
	For $(p,q)\in\Gamma_+$, \eqref{eq:x-coord-of-legendrian-curves}, \eqref{eq:y-coord-of-legendrian-curves},
	and \eqref{eq:u-coord-of-legendrian-curves} imply
	\begin{align}
		\Re\rho(p,q)&=(x'-x)^2+(y'-y)^2=\varepsilon^2+O(\varepsilon^3), \\
		\Im\rho(p,q)&=-4(u'-u)+2(xy'-x'y)=O(\varepsilon^3)
	\end{align}
	and
	\begin{equation}
		(x'-x)\xi+(y'-y)\eta=\varepsilon+O(\varepsilon^3), \qquad
		-(y'-y)\xi+(x'-x)\eta=O(\varepsilon^2).
	\end{equation}
	These estimates imply
	\begin{equation}
		\frac{1}{2}\int_{\Gamma_+}\alpha=\frac{L}\varepsilon+O(\varepsilon).
	\end{equation}
	The integral along $\Gamma_-$ can be calculated similarly.
	Combining the above, we obtain \eqref{eq:OHara-type-formulation-of-Legendrian-knot-energy}.
\end{rem}

There is also a complex-geometric interpretation of the 2-form $\comptwoformH$.
It bears a resemblance to
\begin{equation}
	\partial\conj{\partial}\log(\Im w-\abs{z}^2),
\end{equation}
which is the K\"ahler form (up to a purely imaginary constant factor) of the complex hyperbolic metric
on the Siegel domain $D=\set{(z,w)\in\mathbb{C}^2|\Im w>\abs{z}^2}$.
In fact, these 2-forms are related concretely as follows.
Identifying $D$ with the diagonal of $D\times D$, we extend the function $\Im w-\abs{z}^2$
holomorphically (resp.\ anti-holomorphically) in the first variable $p$ (resp.\ the second variable $q$)
to get the 2-form
\begin{equation}
	\tilde{\Omega}=\partial_p\conj{\partial}_q\log\tilde{\rho},\qquad
	\tilde{\rho}=\frac{w-\conj{w}'}{2i}-z\conj{z}'
\end{equation}
on $D\times D$. Note that we can also write
\begin{equation}
	\tilde{\Omega}=d_pd_q\log\tilde{\rho}.
\end{equation}
The function $\tilde{\rho}$ continuously extends to $\overline{D}\times\overline{D}$.
Moreover, recall that the boundary of the Siegel domain can be identified with $\mathcal{H}$
via \eqref{eq:identification-of-Heisenberg-and-Siegel-boundary}.
With this in mind, let us write
\begin{equation}
	u=-\frac{1}{4}\Re w,\qquad
	r=\Im w-\abs{z}^2,\qquad
	u'=-\frac{1}{4}\Re w',\qquad
	r'=\Im w'-\abs{z'}^2.
\end{equation}
Then we obtain
\begin{equation}
	\begin{split}
		\tilde{\rho}
		&=-2i(u'-u)+\frac{\abs{z'}^2+r'+\abs{z}^2+r}{2}-z\conj{z}'\\
		&=\frac{1}{2}(\abs{z'-z}^2-4i(u'-u)-2i\Im(z\conj{z}')+r+r').
	\end{split}
\end{equation}
Since $\bdry D\times\bdry D$ is given by $r=r'=0$,
this shows that $\tilde{\rho}|_{\bdry D\times\bdry D}$ vanishes only along the diagonal of $\bdry D\times\bdry D$,
and $\tilde{\rho}$ restricts to $\rho/2$ on the complement of the diagonal of $\bdry D\times\bdry D$.
Consequently, the 2-form $\comptwoformH$ equals the pullback (up to the factor 2) of $\tilde{\Omega}$
to $(\bdry D\times\bdry D)\setminus\Delta$.

\bibliography{myrefs}

\end{document}